\documentclass[11pt]{amsart}
\usepackage[
  left=1.15in,
  right=1.15in,
  top=1.0in,
  bottom=1.05in
]{geometry}
\usepackage{amsmath,amsthm,amssymb,mathtools,amsfonts,amsopn,amscd}
\usepackage{bm}
\usepackage{tikz}
\usepackage{tikz-cd}    
\usepackage{rotating}
\usepackage{graphicx}
\usepackage{etoolbox}
\usepackage{enumitem}
\usepackage{longtable}
\usepackage{booktabs} 
\usepackage{tabularx} 
\usepackage{comment}
\usepackage{hyperref}
\usepackage{subfiles}
\usepackage{calc}
\usepackage[numbered]{bookmark}  
\usepackage{mathrsfs}
\usepackage[sc]{mathpazo}
\usepackage{euscript}

\usepackage[noabbrev]{cleveref}
\crefname{section}{Section}{Sections}
\Crefname{section}{Section}{Sections}
\crefname{subsection}{Section}{Sections}
\Crefname{subsection}{Section}{Sections}
\crefname{equation}{Equation}{Equations}
\Crefname{equation}{Equation}{Equations}
\crefname{figure}{Figure}{Figures}
\Crefname{figure}{Figure}{Figures}
\crefname{table}{Table}{Tables}
\Crefname{table}{Table}{Tables}
\crefname{thm}{Theorem}{Theorems}
\Crefname{thm}{Theorem}{Theorems}
\crefname{lem}{Lemma}{Lemmas}
\Crefname{lem}{Lemma}{Lemmas}
\crefname{prop}{Proposition}{Propositions}
\Crefname{prop}{Proposition}{Propositions}
\crefname{cor}{Corollary}{Corollaries}
\Crefname{cor}{Corollary}{Corollaries}
\crefname{df}{Definition}{Definitions}
\Crefname{df}{Definition}{Definitions}
\crefname{ex}{Example}{Examples}
\Crefname{ex}{Example}{Examples}
\crefname{rmk}{Remark}{Remarks}
\Crefname{rmk}{Remark}{Remarks}
\crefname{conj}{Conjecture}{Conjectures}
\Crefname{conj}{Conjecture}{Conjectures}
\crefname{clm}{Claim}{Claims}
\Crefname{clm}{Claim}{Claims}

\AddToHook{env/lem/begin}{\crefalias{thm}{lem}}
\AddToHook{env/prop/begin}{\crefalias{thm}{prop}}
\AddToHook{env/cor/begin}{\crefalias{thm}{cor}}
\AddToHook{env/df/begin}{\crefalias{thm}{df}}
\AddToHook{env/ex/begin}{\crefalias{thm}{ex}}
\AddToHook{env/rmk/begin}{\crefalias{thm}{rmk}}
\AddToHook{env/clm/begin}{\crefalias{thm}{clm}}
\AddToHook{env/conj/begin}{\crefalias{thm}{conj}}

\setlist[enumerate,1]{label=(\roman*)}

\newif\ifusebiblatex

\definecolor{secondaryColor}{RGB}{0, 0, 170}
\hypersetup{
	colorlinks=true,
	linkcolor=blue,
	urlcolor=secondaryColor,
	citecolor=secondaryColor,
	linktoc=page,
}

\theoremstyle{plain}
\newtheorem{thm}{Theorem}[section]
\newtheorem{lem}[thm]{Lemma}
\newtheorem{prop}[thm]{Proposition}
\newtheorem{cor}[thm]{Corollary}
\newtheorem{conj}[thm]{Conjecture}

\theoremstyle{definition}
\newtheorem{df}[thm]{Definition}

\theoremstyle{remark}
\newtheorem{rmk}[thm]{Remark}

\newcommand{\ZZ}{\mathbb{Z}}
\newcommand{\DD}{\mathbb{D}}
\newcommand{\NN}{\mathbb{N}}
\newcommand{\QQ}{\mathbb{Q}}

\newcommand{\CC}{\mathbb{C}}
\newcommand{\HH}{\mathbb{H}}
\newcommand{\FF}{\mathbb{F}}

\newcommand{\Dcal}{\mathcal{D}}
\newcommand{\Ecal}{\mathcal{E}}

\newcommand{\Ical}{\mathcal{I}}

\newcommand{\Lcal}{\mathcal{L}}

\newcommand{\Zcal}{\mathcal{Z}}

\newcommand{\Frac}{\operatorname{Frac}}

\newcommand{\Mat}{\operatorname{Mat}}

\newcommand{\Tr}{\operatorname{Tr}}

\let\oldforall\forall
\renewcommand{\forall}{\oldforall \: }
\let\oldexist\exists
\renewcommand{\exists}{\oldexist \: }

\newcommand{\dep}{\operatorname{dep}}
\newcommand{\wt}{\operatorname{wt}}

\newcommand{\ov}[1]{\overline{#1}}

\newcommand{\Li}{\operatorname{Li}}
\newcommand{\bs}{\mathfrak{s}}
\newcommand{\bu}{\mathbf{u}}

\allowdisplaybreaks

\newcommand{\bT}{\mathbf T}

\newcommand{\Norm}{\operatorname{Nr}}
\newcommand{\Conv}{{\mathrm{Conv}}}

\makeatletter

\newcommand{\myToC}{{
		\renewcommand{\contentsname}{}
		\@starttoc{toc}{\contentsname}
}}

\patchcmd{\@tocline}
{\hfil}
{\leaders\hbox{\,.\,}\hfil}

\makeatother

\title[Graded Structures of 
$v$-adic Carlitz Multiple Polylogarithms and Multiple Zeta Values]
{Graded Structures of 
$v$-adic Carlitz Multiple Polylogarithms and $v$-adic Multiple Zeta Values}

\author{Hung-Chun Tsui}
\address{(Hung-Chun Tsui) Department of Mathematics, National Tsing Hua University, No. 101, Sec. 2, Guangfu Rd., East Dist., Hsinchu City 300044, Taiwan (R.O.C.)}
\email{hctsui@gapp.nthu.edu.tw}

\date{\today}

\subjclass[2020]{Primary 11M38; Secondary 11M32, 11R58}
\keywords{Function field, $v$-adic multiple zeta values, Carlitz multiple polylogarithms}

\thanks{}

\begin{document}

\begin{abstract}
In this paper, we study $v$-adic Carlitz multiple polylogarithms and $v$-adic multiple zeta values. We prove that, for every finite place $v$ of $\FF_q(\theta)$, the natural weight gives a graded structure on the algebras generated by these values. Moreover, we show that every $\ov{\FF_q(\theta)}$-linear relation among these values arises from an $\FF_q(\theta)$-linear relation. In particular, our results establish a function field analogue of the Furusho--Yamashita direct sum conjecture and extend Chen's work from finite places of degree one to arbitrary finite places.
\end{abstract}

\maketitle
\tableofcontents

\section{Introduction}
\subsection{Classical background}\label{sec:classical-background}
Let $\NN$ denote the set of positive integers and $\ZZ$ the set of integers. We let
\[
\Ical:=\{\varnothing\}\cup\bigcup_{r\geq1}\NN^r
\]
denote the set of indices and define
\[
\dep(\bs):=r,
\qquad
\wt(\bs):=s_1+\cdots+s_r,
\qquad
\dep(\varnothing)=\wt(\varnothing)=0
\]
for every $\bs=(s_1,\ldots,s_r)\in\Ical$.
For an admissible non-empty index
$\bs=(s_1,\ldots,s_r)\in\Ical$, namely $s_1\geq2$, the classical
\emph{multiple zeta value} is defined by
\[
\zeta(\bs)
:=
\sum_{n_1>\cdots>n_r\geq1}
\frac{1}{n_1^{s_1}\cdots n_r^{s_r}}.
\]
In depth one, these values recover the special values of the Riemann
zeta function at positive integers. We refer the reader to \cite{Zha16} for further background.

For $w\geq1$, let $\Zcal_w$ denote the $\QQ$-vector space spanned by
all multiple zeta values of weight $w$, and let $\Zcal$ be the
$\QQ$-algebra generated by all multiple zeta values. Multiple zeta
values satisfy both the stuffle and shuffle product relations
\cite[\S3]{Zha16}. In particular, we have $\Zcal_{w_1}\Zcal_{w_2}\subseteq\Zcal_{w_1+w_2}$ for any $w_1,w_2\geq1$.

Multiple zeta values can also be viewed as special values of
one-variable multiple polylogarithms. Furusho \cite{Fur04} constructed
$p$-adic multiple polylogarithms by means of Coleman's $p$-adic
iterated integration \cite{Col82} and defined the corresponding
\emph{$p$-adic multiple zeta values} $\zeta_p(\bs)$ through suitable
limits at $z=1$. Let $\Zcal_p$ be the $\QQ$-algebra generated
by all $p$-adic multiple zeta values, and for $w\geq1$, let
$\Zcal_{p,w}$ denote the $\QQ$-vector space spanned by the
$p$-adic multiple zeta values of weight $w$. The $p$-adic multiple
zeta values also satisfy the stuffle and shuffle product relations
\cite{BF06}. In particular, we have
$\Zcal_{p,w_1}\Zcal_{p,w_2}
\subseteq
\Zcal_{p,w_1+w_2}$ for any $
w_1,w_2\geq1$.

Furusho and Yamashita formulated the following direct sum conjecture
for $p$-adic multiple zeta values \cite[Conjecture~5]{Yam10}.

\begin{conj}[Furusho--Yamashita]\label{conj:FY}
For every prime number $p$, 
\[
\Zcal_p
=
\QQ\oplus
\bigoplus_{w\geq1}\Zcal_{p,w}
\]
forms a graded $\QQ$-algebra.
\end{conj}

This conjecture is the $p$-adic analogue of the celebrated
Goncharov's direct sum conjecture for classical multiple zeta
values, which predicts that the weight gives a grading on the algebra
of multiple zeta values
(see, for example, \cite[Conjecture~4.2]{Gon97}).

\subsection{The function field counterpart}\label{sec:v-adicMZVs}

Let $p$ be a prime number, let $q$ be a power of $p$, and put
\[
A=\FF_q[\theta],
\qquad
K=\FF_q(\theta).
\]
Let $K_\infty=\FF_q(\!(1/\theta)\!)$ be the completion of $K$ at the
infinite place and let $\CC_\infty$
be the completion of an algebraic closure of $K_\infty$. We denote by $|\cdot|_\infty$ the $\infty$-adic absolute value on $\CC_\infty$, normalized such that $|\theta|_\infty=q$. Fix an
algebraic closure $\ov{K}$ of $K$ together with an embedding
$\ov{K}\hookrightarrow\CC_\infty$.

For a non-empty index $\bs=(s_1,\ldots,s_r)\in\Ical$, define
\[
\DD_{\bs,\infty}^{\Conv}
:=
\left\{
(u_1,\ldots,u_r)\in\CC_\infty^r
\ \middle|\
|u_i|_\infty<q^{\frac{s_iq}{q-1}}
\text{ for }1\leq i\leq r
\right\}.
\]
For $\bu=(u_1,\ldots,u_r)\in\DD_{\bs,\infty}^{\Conv}$, the
\emph{Carlitz multiple polylogarithm} associated with $\bs$ is defined in
\cite{Chang14} by
\[
\Li_{\bs}(\bu)
:=
\sum_{i_1>\cdots>i_r\geq0}
\frac{u_1^{q^{i_1}}\cdots u_r^{q^{i_r}}}
{L_{i_1}^{s_1}\cdots L_{i_r}^{s_r}}
\in\CC_\infty,
\]
where $L_0:=1$ and
$L_i:=\prod_{j=1}^i(\theta-\theta^{q^j})$ for all $i\geq1$.
We remark that these values satisfy the harmonic product relations, which preserve
the total weight (see \cite[\S6.2]{Chang14}).

On the other hand, Thakur \cite{Thakur04} introduced
the \emph{$\infty$-adic multiple zeta values}
\[
\zeta_A(\bs)
:=
\sum_{\substack{a_1,\ldots,a_r\in A\ {\rm monic}\\
\deg a_1>\cdots>\deg a_r\geq0}}
\frac{1}{a_1^{s_1}\cdots a_r^{s_r}}
\in K_\infty
\]
for every non-empty index $\bs=(s_1,\ldots,s_r)\in\Ical$.
Thakur \cite{Thakur10} proved that these values satisfy the
$q$-shuffle product relations, which also preserve the total weight.

Chang \cite{Chang14} proved that the algebra generated by Carlitz multiple polylogarithmic values at algebraic points is graded by weight. As a
consequence, the algebra generated by $\infty$-adic multiple zeta values is also graded by weight. Moreover, he showed that every $\ov{K}$-linear relation among these values is generated by $K$-linear relations.

We now turn to the finite place setting. Let $v$ be a finite place of
$K$, let $K_v$ be the completion of $K$ at $v$, and let $\CC_v$ be
the completion of an algebraic closure of $K_v$. We denote by
$|\cdot|_v$ the normalized $v$-adic absolute value on $\CC_v$ and fix
an embedding $\ov{K}\hookrightarrow\CC_v$. For a non-empty index
$\bs=(s_1,\ldots,s_r)$, put
\[
\DD_{\bs,v}^{\Conv}
:=
\left\{
(u_1,\ldots,u_r)\in\CC_v^r
\ \middle|\
|u_1|_v<1,\quad
|u_i|_v\leq1\ \text{ for }2\leq i\leq r
\right\}.
\]
For
$\bu\in(\ov{K}^{\times})^{\dep(\bs)}\cap\DD_{\bs,v}^{\Conv}$,
the same defining series of $\Li_{\bs}(\bu)$ converges $v$-adically.
We denote its value by $\Li_{\bs}(\bu)_v$. The harmonic product relations remain valid on this convergence domain by the same argument as in the $\infty$-adic setting (cf. \cite[\S6.2]{Chang14}).

Chang and Mishiba \cite{CM19} constructed $v$-adic analytic continuations of
Carlitz multiple polylogarithms and Carlitz multiple star
polylogarithms by means of logarithms of suitable $t$-modules. Using an $\infty$-adic presentation of 
multiple zeta values in terms of Carlitz multiple star polylogarithms
at integral points, they then defined the corresponding
\emph{$v$-adic multiple zeta values} $\zeta_A(\bs)_v$
for every index $\bs\in\Ical$ (see \cite[\S6.1]{CM21}).
Chang, Chen, and Mishiba \cite[Theorem~1.2.3]{CCM22} further proved that
these values satisfy the same $q$-shuffle product relations as their
$\infty$-adic counterparts.

For $w\geq1$, let $\Lcal_{v,w}$ and
$\overline{\Lcal}_{v,w}$ be respectively the $K$-span and the
$\ov{K}$-span of all values
\[
\Li_{\bs}(\bu)_v,
\qquad
\wt(\bs)=w,
\qquad
\bu\in(\ov{K}^{\times})^{\dep(\bs)}
\cap\DD_{\bs,v}^{\Conv}.
\]
Let $\Lcal_v$ and $\overline{\Lcal}_v$ be the $K$- and
$\ov{K}$-algebras generated by these values, respectively.
Likewise, let $\Zcal_{v,w}$ and $\overline{\Zcal}_{v,w}$ be
respectively the $K$-span and the $\ov{K}$-span of the values
$\zeta_A(\bs)_v$ with $\wt(\bs)=w$, and let $\Zcal_v$ and
$\overline{\Zcal}_v$ be the corresponding $K$- and
$\ov{K}$-algebras. 

When $v$ is a finite place of degree one, Chen \cite{ChenABP} established the
$v$-adic analogue of Chang's linear independence results for Carlitz multiple polylogarithmic values at algebraic points and, as an application, the corresponding direct sum result for $v$-adic multiple zeta values. The purpose of the
present paper is to extend these results to finite places of arbitrary degree.

\subsection{Main results}

Our main result establishes the function field analogue of the
Furusho--Yamashita conjecture and, in particular, generalizes Chen's result \cite{ChenABP} from degree one finite places to arbitrary finite places.

\begin{thm}\label{thm:main}
For every finite place $v$ of $K$, 
\[
\overline{\Lcal}_v
=
\ov{K}\oplus\bigoplus_{w\geq1}\overline{\Lcal}_{v,w}
\]
forms a graded $\ov{K}$-algebra. Moreover, the canonical map
\[
\ov{K}\otimes_K\Lcal_v
\xrightarrow{\ \sim\ }
\overline{\Lcal}_v
\]
is an isomorphism.
\end{thm}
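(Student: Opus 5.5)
The plan follows Chang's strategy in the $\infty$-adic setting and Chen's in the degree one case, with a $v$-adic version of the ABP criterion adapted to places of arbitrary degree. Let $d=\deg v$ and let $\wp\in A$ be the monic irreducible generator of $v$. The first step is to realize each $\Li_{\bs}(\bu)_v$ as a $v$-adic specialization of a rigid analytic solution of a Frobenius difference system. Concretely, I would use the Anderson--Brownawell--Papanikolas type system attached to the $t$-motive defining $\Li_{\bs}$ (Chang's matrices $\Phi_{\bs,\bu}$, with entries built from $(t-\theta)^{s_i}$ and $\bu^{(-1)}$), and construct $v$-adic solutions $\Psi_v$ via the series $\mathcal{L}_{\bs,\bu}(t)=\sum (u_1^{(i_1)}\cdots)/(\ldots)$ converging on a suitable $v$-adic disc. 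Because $v$ has degree $d$, the relevant specialization is at $t=\theta$ after passing to the $d$-th power of Frobenius. I would therefore iterate the system $d$ times, replacing $\Phi$ by $\Phi^{[d]}=\Phi^{(-(d-1))}\cdots\Phi^{(-1)}\Phi$, so that $\Psi^{(-d)}=\Phi^{[d]}\Psi$. Specializing at $t=\theta$ would then recover $\Li_{\bs}(\bu)_v$ up to explicit nonzero algebraic factors, as in Chang--Mishiba's construction via logarithms of $t$-modules.

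The second step, which I expect to be the main obstacle, is a $v$-adic ABP criterion for $\sigma^d$-difference equations. If $\Phi\in\Mat_\ell(\ov{K}[t])$ with $\det\Phi$ a power of $(t-\theta)$ times a unit, $\psi$ is a column vector of $v$-adically convergent functions with $\psi^{(-d)}=\Phi\psi$, and $\rho\in\ov{K}^\ell$ satisfies $\rho\psi(\theta)=0$, then there should be $P\in\ov{K}[t]^\ell$ with $P\psi=0$ and $P(\theta)=\rho$. I would try to adapt the ABP argument directly: build auxiliary functions by Siegel's lemma, bound their degrees, and iterate the Frobenius twist $d$ steps at a time. The difficulty is that with $d>1$ the $\theta^{q^{j}}$ for $j\not\equiv 0 \bmod d$ do not lie in the convergence disc. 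One must check that the vanishing-order estimates at $\theta^{q^{dn}}$ still force the key polynomial to vanish identically. Chen's degree-one argument is where I would start, tracking the extra factors from the intermediate twists.

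With this criterion in hand, the remaining steps follow Chang's formal argument. First, suppose there is a nontrivial $\ov{K}$-linear relation mixing weights. Use the harmonic product to reduce products of values to linear combinations of values of a single weight, so that it suffices to treat linear relations among the values and $1$. Then assemble the block-diagonal (direct sum) system for all indices involved. Apply the criterion to lift the relation to a functional relation $P\psi=0$. Comparing the orders of $(t-\theta)$ that arise from the weights, twisting appropriately and using the triangular structure of $\Phi$, separates each homogeneous weight component. This gives the direct sum decomposition. For the second claim, the lifted functional relation has coefficients in $\ov{K}(t)$. A Galois descent argument, averaging over $\mathrm{Gal}(\ov{K}/K)$ applied to the coefficients together with the uniqueness of the minimal relation (normalized with a coefficient $1$), shows that the relation space is defined over $K$. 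Hence $\ov{K}$-linear relations are spanned by $K$-linear ones, which gives the injectivity of $\ov{K}\otimes_K\Lcal_v\to\overline{\Lcal}_v$. Surjectivity is clear.
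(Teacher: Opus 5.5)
\textbf{The analytic framework does not work $v$-adically.} The $\sigma^d$-criterion you would need is exactly the step you leave open, and as set up it cannot be applied.

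Chang's system has $(1,1)$-entry $(t-\theta)^w$, so the first solution entry $f$ must satisfy $f^{(-1)}=(t-\theta)^wf$. If $f\in\Ecal_v$, twisting gives $f=\bigl((t-\theta^{q})\cdots(t-\theta^{q^n})\bigr)^wf^{(n)}$ for every $n$. So $f$ vanishes at all $\theta^{q^j}$, $j\geq1$. These are infinitely many distinct points of the closed unit disc (indeed $\theta^{q^{dn}}$ converges to the Teichm\"uller lift of $\theta\bmod v$), hence $f=0$. Concretely, $\prod_{i\geq1}(1-t/\theta^{q^i})$ diverges $v$-adically, and the series $\mathcal{L}_{\bs,\bu}$ has accumulating poles. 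Iterating to $\Phi^{[d]}$ changes none of this, so there is no $\Ecal_v$-valued $\psi$ to feed into any criterion at $t=\theta$.

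Two ideas are missing.
\begin{enumerate}
\item Replace $(t-\theta)$ by $(1-vt)$: use $\Omega_v(t)=\prod_{i\geq1}(1-v^{q^i}t)\in\Ecal_v$ and specialize at $t=v^{-1}$, whose orbit $v^{-q^N}$ escapes to $v$-adic infinity.
\item For $d>1$ this alone gives $\Omega_v^{(i)}(v^{-1})v^{-i}=\widetilde{\pi}_v/\prod_{j=1}^i(v-v^{q^j})$ rather than $\widetilde{\pi}_v/L_i$. The paper therefore adjoins $T$ with $v(T)=t^{-1}$ and inserts $\Dcal_{v,i}(T)=\prod_{j=1}^i\Delta_v(T,\theta^{q^j})$. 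This satisfies $\Dcal_{v,i}^{(-1)}=\delta_v\Dcal_{v,i-1}$ and corrects the denominator at $T=\theta$.
\end{enumerate}
Trace coordinates then give an ordinary single-twist system over $\ov K[t]$ with entries in $\Ecal_v$ and $\det\Phi$ vanishing only at $0$ and $v^{-1}$. Chen's existing criterion applies at $\gamma=v^{-1}$, and no new criterion is needed.

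\textbf{The descent step also fails.} $\mathrm{Gal}(\ov K/K)$ does not act on the values $\Li_{\bs}(\bu)_v\in\CC_v$ in a way that fixes them. So conjugating the coefficients of a relation does not give a relation among the same values, and uniqueness of the normalized minimal relation yields nothing. Besides, $\ov K/K$ is inseparable, so Galois invariance would only place the coefficients in the perfect closure of $K$.

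The paper's mechanism is Frobenius, not Galois. Lift the minimal relation $Z_1+\sum_{j\geq2}c_jZ_j=0$ to $\mathbf p\psi=0$ and specialize at $t=v^{-q^N}$. There $\Omega_v$ vanishes and only the last trace block survives, equal to $\lambda_{v,N}^w(\widetilde{\pi}_v^wZ_j)^{q^N}\mathbf V_N$. This gives $\sum_jA_j(\theta^{q^N})Z_j^{q^N}=0$ with $A_j(T)=Q_j(v(T)^{-1})(1,T,\ldots,T^{d-1})^{\mathsf T}$. Since the relation space is one-dimensional, $S_j(\theta^{q^N})=c_j^{q^N}$ for $S_j=A_j/A_1$. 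Frobenius rigidity ($S^{(1)}=S$ forces $S\in\FF_q(T)$) then gives $c_j=S_j(\theta)\in K$.

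Because the fibers are vectors $\mathbf V_N$ rather than scalars when $d>1$, you also need the orbit-nondegeneracy lemma to know $A_1(\theta^{q^N})\neq0$. Weights are separated the same way, after homogenizing by $\Omega_v^{w_1-w_i}$ so that lower weights die at $t=v^{-q^N}$. Your ``comparing orders of $(t-\theta)$'' is the $\infty$-adic version of this (zeros of $\Omega$ at $\theta^{q^N}$), and it has no counterpart in your setup.
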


The corresponding result for $v$-adic multiple zeta values is as
follows.

\begin{thm}\label{thm:mainMZV}
For every finite place $v$ of $K$, 
\[
\overline{\Zcal}_v
=
\ov{K}\oplus\bigoplus_{w\geq1}\overline{\Zcal}_{v,w}
\]
forms a graded $\ov{K}$-algebra. Moreover, the canonical map
\[
\ov{K}\otimes_K\Zcal_v
\xrightarrow{\ \sim\ }
\overline{\Zcal}_v
\]
is an isomorphism.
\end{thm}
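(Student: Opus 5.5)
Theorem~\ref{thm:mainMZV} will be deduced from Theorem~\ref{thm:main}, so the plan is to show that $v$-adic MZVs are, weight by weight, $K$-linear combinations of $v$-adic Carlitz multiple polylogarithmic values, and conversely that the MZV algebra is closed under the grading and descent statements inherited from $\overline{\Lcal}_v$. The key input is the $v$-adic analogue of the $\infty$-adic formula of Anderson--Thakur and Chang: by construction in \cite[\S6.1]{CM21}, $\zeta_A(\bs)_v$ is obtained from the presentation of $\zeta_A(\bs)$ as a $K$-linear combination of Carlitz multiple star polylogarithms $\Li^\star_{\bs'}(\bu)$ at integral points $\bu\in A^{\dep(\bs')}$ with $\wt(\bs')=\wt(\bs)$, replacing each value by its $v$-adic analytic continuation. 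First, I would express each star polylogarithm as a $K$-linear combination of ordinary $\Li_{\bs''}(\bu'')$ of the same weight, using the standard star-to-nonstar expansion that merges adjacent indices and multiplies the corresponding arguments; this preserves weight. Second, I would handle the fact that integral points need not lie in $\DD^{\Conv}_{\bs,v}$. Here I would use the Chang--Mishiba analytic continuation together with a functional equation / $v$-adic identity expressing the continued value as a $K$-linear combination of convergent values $\Li_{\bs}(\bu)_v$ of the same weight. One approach is to replace $u$ by $u^{q^{\ell}}$-type arguments, or to use the fact that the continuation comes from logarithms of $t$-modules whose coordinates satisfy $\Li$-relations of fixed weight. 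Granting this, we obtain $\Zcal_{v,w}\subseteq\Lcal_{v,w}$ for all $w$.

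Next comes the grading. Let $\sum_w x_w=0$ with $x_w\in\overline{\Zcal}_{v,w}\subseteq\overline{\Lcal}_{v,w}$ and finitely many $x_w$ nonzero, together with a constant term in $\ov K$. Theorem~\ref{thm:main} gives directness in $\overline{\Lcal}_v$, hence each $x_w=0$ and the constant is $0$. This makes the sum $\ov K+\sum_w\overline{\Zcal}_{v,w}$ direct. It equals $\overline{\Zcal}_v$ and is a graded algebra because $q$-shuffle products \cite[Theorem~1.2.3]{CCM22} give $\Zcal_{v,w_1}\Zcal_{v,w_2}\subseteq\Zcal_{v,w_1+w_2}$.

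For the descent statement, surjectivity of $\ov K\otimes_K\Zcal_v\to\overline{\Zcal}_v$ is clear. By the grading just proved, which also holds for $\Zcal_v$ over $K$, injectivity reduces to each weight: $\ov K\otimes_K\Zcal_{v,w}\to\overline{\Lcal}_{v,w}$ must be injective. This follows because $\Zcal_{v,w}\subseteq\Lcal_{v,w}$ and the composite $\ov K\otimes_K\Zcal_{v,w}\to\ov K\otimes_K\Lcal_{v,w}\to\overline{\Lcal}_v$ is injective. The first map is injective since $\ov K$ is flat over $K$; the second is injective by Theorem~\ref{thm:main}, restricted to the graded piece.

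The main obstacle is the second step: verifying that the $v$-adic MZV, defined via continuation at possibly non-convergent integral points, lies in $\Lcal_{v,w}$ and not merely in some larger span. I would first try to deduce it directly from the construction in \cite{CM21}, where the continuation is realized as a $v$-adic logarithm whose relevant coordinate is a $K$-combination of convergent $\Li_{\bs}(\bu)_v$ of weight $\wt(\bs)$.
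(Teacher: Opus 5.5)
\textbf{There is a gap in the key inclusion.} Your overall architecture matches the paper's. Everything is reduced to Theorem~\ref{thm:main} via the weightwise inclusion $\Zcal_{v,w}\subseteq\Lcal_{v,w}$. The grading then follows from directness in $\overline{\Lcal}_v$ together with the $q$-shuffle relations of \cite{CCM22}. Your descent argument (flatness of $\ov K$ over $K$ plus the weightwise isomorphism $\ov K\otimes_K\Lcal_{v,w}\cong\overline{\Lcal}_{v,w}$) is a correct repackaging of the paper's argument that $K$-independent elements of $\Zcal_{v,w}$ stay $\ov K$-independent. The problem is that the one step with real content beyond Theorem~\ref{thm:main}, namely $\Zcal_{v,w}\subseteq\Lcal_{v,w}$, is not proved. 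You explicitly ``grant'' it, and the routes you sketch ($u\mapsto u^{q^\ell}$-type substitutions, unspecified functional equations of $t$-module logarithms) are not arguments.

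Moreover, the order you propose is not justified as stated. The integral points $\bu\in A^r$ in the Chang--Mishiba presentation in general have $|u_1|_v=1$. So they lie outside $\DD^{\Conv}_{\bs,v}$, and the values $\Li^\star_{\bs}(\bu)_v$ are defined by analytic continuation through logarithms of $t$-modules, not by the defining series. The star-to-nonstar merging identity is a manipulation of convergent series. Applying it first at such points presupposes that the continued star value equals the corresponding sum of continued non-star values, including at merged arguments such as $u_1u_2$ that may also lie outside the convergence domain. That needs its own proof, which you do not supply.

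The paper avoids this by reversing the order and citing known results. By \cite[Remark~5.2.1]{CCM22}, the $v$-adic star polylogarithmic values at integral points already lie in the $K$-span of \emph{convergent} $v$-adic star polylogarithmic values of the same weight. Only then does \cite[Proposition~3.3.5]{ChenABP} identify the $K$-spans of convergent star and non-star values of weight $w$. At that stage the merging identity and its inclusion--exclusion inverse are genuine series identities, and merging preserves the conditions $|u_1|_v<1$, $|u_i|_v\le 1$. With these two inputs in place of your second step, the rest of your proposal goes through.
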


As an immediate consequence, we obtain the following transcendence
statement.

\begin{cor}\label{cor:transcendence}
Let $v$ be a finite place of $K$. Then the following assertions hold.
\begin{enumerate}
\item For every non-empty index $\bs\in\Ical$ and every
$\bu\in(\ov{K}^{\times})^{\dep(\bs)}\cap\DD_{\bs,v}^{\Conv}$,
the value $\Li_{\bs}(\bu)_v$ is either zero or transcendental over $K$.

\item For every non-empty index $\bs\in\Ical$, the $v$-adic multiple
zeta value $\zeta_A(\bs)_v$ is either zero or transcendental over $K$.
\end{enumerate}
\end{cor}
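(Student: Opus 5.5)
We deduce \Cref{cor:transcendence} from \Cref{thm:main} and \Cref{thm:mainMZV} by the standard argument of Chang. We treat assertion (i); assertion (ii) is identical with $\overline{\Zcal}_v$ and $\Zcal_v$ in place of $\overline{\Lcal}_v$ and $\Lcal_v$.

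Fix $\bs$ with $w:=\wt(\bs)\geq1$, fix $\bu$ as in the statement, and put $x:=\Li_{\bs}(\bu)_v$, so that $x\in\overline{\Lcal}_{v,w}\subseteq\CC_v$. Suppose $x\neq0$ and that $x$ is algebraic over $K$. Since $\ov{K}$ is embedded in $\CC_v$ and is algebraically closed, $x\in\ov{K}$. Hence $x$ lies both in the degree-$0$ component $\ov{K}$ and in the weight-$w$ component $\overline{\Lcal}_{v,w}$.

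By the directness of the sum $\ov{K}\oplus\bigoplus_{w\geq1}\overline{\Lcal}_{v,w}$ in \Cref{thm:main}, the relation $x\cdot1 - x = 0$ can hold only if both components vanish. Here $x\cdot1$ is the degree-$0$ component and $-x$ is the weight-$w$ component. This forces $x=0$, a contradiction.

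Even if $x$ is only assumed algebraic over $K$ rather than lying in $\ov{K}$ a priori, the first step still applies: the algebraic closure of $K$ inside $\CC_v$ is the image of the fixed embedding of $\ov{K}$. So the argument goes through. The isomorphism $\ov{K}\otimes_K\Lcal_v\cong\overline{\Lcal}_v$ is not even needed here.

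No step is a genuine obstacle. The only point to check is that the decomposition in \Cref{thm:main} is an internal direct sum inside $\CC_v$ with the constants $\ov{K}$ as the weight-$0$ piece, and this is exactly what the theorem asserts.
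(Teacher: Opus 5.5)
Your proposal is correct and is essentially the paper's argument. The paper also notes that a nonzero algebraic value would lie in $\ov{K}\cap\overline{\Lcal}_{v,\wt(\bs)}=\{0\}$ by the direct sum in \cref{thm:main}, and it treats (ii) ``similarly,'' which is exactly your use of \cref{thm:mainMZV}; equivalently, one could use the inclusion $\overline{\Zcal}_{v,w}\subseteq\overline{\Lcal}_{v,w}$.
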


The main idea and strategy of the proof originate in Chang's work
\cite{Chang14} and were subsequently adapted to the $v$-adic setting
by Chen \cite{ChenABP}. However, several new ingredients are required
when $\deg v>1$. In the degree one case, the
relevant deformation series is a one-variable series and its
specializations are scalar. For a finite place of arbitrary degree
$d$, we introduce the auxiliary function field determined by
$v(T)=t^{-1}$ and suitable polynomials $\Delta_v(T,a)$ and
$\Dcal_{v,i}(T)$ in order to construct a two-variable deformation of
$v$-adic Carlitz multiple polylogarithms (see \S\ref{subsec:auxiliary-polynomials} and \S\ref{subsec:L-series}). Passing to trace coordinates
then produces a finite-dimensional Frobenius system to which the
$v$-adic Anderson--Brownawell--Papanikolas criterion can be applied (see Proposition \ref{prop:Frobenius-system}).
Also, the specializations at the points $t=v^{-q^N}$ are no longer scalar, so additional nondegeneracy arguments are also needed to recover the linear independence conclusions (see Lemmas~\ref{lem:orbit-nondegeneracy} and \ref{lem:Frobenius-rigidity}).
When $d=1$, these additional constructions reduce to those appearing in Chen's proof. Once these ingredients are established, the proofs of the main theorems follow the strategy of Chang and Chen and are given in Appendix~\ref{sec:proof-main}.

\section*{Acknowledgments}
The author would like to thank Chieh-Yu Chang for helpful discussions and valuable suggestions. The author also gratefully acknowledges the support of the National Science and Technology Council over the past few years under grant no. 113-2628-M-007-004.

\section{The \texorpdfstring{$v$}{v}-adic Setting}
\label{sec:vadic-setting}

\subsection{\texorpdfstring{$v$}{v}-adic Anderson--Thakur series and ABP criterion}\label{subsec:notations}
We also denote by $v$ the monic irreducible polynomial in $A$ associated with the finite place $v$.  For every $n\in\ZZ$, we define the $n$-fold Frobenius twist
\[
(-)^{(n)}:
\CC_v(\!(t)\!)
\longrightarrow
\CC_v(\!(t)\!)
\]
by
\[
\left(
\sum_{i}
a_{i}t^i
\right)^{(n)}
:=
\sum_{i}
a_{i}^{q^n}t^i.
\]
Then the map $(-)^{(n)}$ is an
$\FF_q[t]$-algebra automorphism of $\CC_v(\!(t)\!)$. 

Following \cite[\S2.1]{ChenABP}, the ring of $v$-adic entire series is
defined by
\[
\Ecal_v
:=
\left\{
\sum_{i=0}^{\infty}a_it^i\in\ov{K}[\![t]\!]
\ \middle|\
\lim_{i\to\infty}|a_i|_v^{1/i}=0,\ 
[K_v(a_0,a_1,\ldots):K_v]<\infty
\right\}.
\]
Thus every element of $\Ecal_v$ defines an entire function on $\CC_v$,
and its coefficients are contained in a fixed finite extension of
$K_v$.

We also recall the $v$-adic Anderson--Thakur series introduced in
\cite[Definition~3.1.1]{ChenABP}. For
$\alpha\in\ov{K}^{\times}$ satisfying $|\alpha|_v<1$, put
\[
\Omega_{\alpha}(t)
:=
\prod_{i\geq1}
\left(1-\alpha^{q^i}t\right).
\]
By \cite[Proposition~3.1.3]{ChenABP}, one has
\begin{equation}\label{eq:v-adic AT}
  \Omega_{\alpha}(t)\in\Ecal_v,
\qquad
\Omega_{\alpha}^{(-1)}(t)=(1-\alpha t)\Omega_{\alpha}(t),
\qquad
\Omega_{\alpha}^{(n)}(t)=\frac{\Omega_{\alpha}(t)}{\prod_{j=1}^{n}(1-\alpha^{q^j}t)}
\quad(n\geq0).  
\end{equation}
Moreover, we let
\[
\widetilde{\pi}_{\alpha}
:=
\Omega_{\alpha}(\alpha^{-1})
\in\CC_v^\times.
\]

We will use the following theorem due to Chen \cite{ChenABP}, which is the $v$-adic analogue of the
Anderson--Brownawell--Papanikolas criterion in the $\infty$-adic
setting (see \cite{ABP04}).

\begin{thm}[{\cite[Theorem~2.2.2]{ChenABP}}]
\label{thm:vadic-ABP}
Let
\[
\Phi(t)\in\Mat_{\ell}(\ov{K}[t]),
\qquad
\psi(t)\in\Mat_{\ell\times1}(\Ecal_v)
\]
satisfy $\psi^{(-1)}=\Phi\psi$.
Let
$\gamma\in
\ov{K}^{\times}\setminus\ov{\FF}_q^{\times}$
and suppose that
\[
\det\Phi\left(\gamma^{(-i)}\right)\neq0
\qquad
\text{for every }i\in\NN.
\]
If
$\rho\in\Mat_{1\times\ell}(\ov{K})$
satisfies
\[
\rho\psi(\gamma)=0,
\]
then there exists
$P(t)\in\Mat_{1\times\ell}(\ov{K}[t])$
such that
\[
P(\gamma)=\rho
\qquad\text{and}\qquad
P(t)\psi(t)=0.
\]
\end{thm}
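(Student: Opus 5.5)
The plan is to follow the original Anderson--Brownawell--Papanikolas argument \cite{ABP04}, with the $\infty$-adic analytic estimates replaced by $v$-adic ones. The hypothesis $[K_v(a_0,a_1,\ldots):K_v]<\infty$ in the definition of $\Ecal_v$ ensures that all coefficients of $\psi$ lie in a fixed finite extension of $K_v$, which is complete and discretely valued.

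\emph{Reduction.} Consider the $\ov{K}$-vector space
\[
V_\gamma:=\{\rho\in\Mat_{1\times\ell}(\ov{K}) : \rho\psi(\gamma)=0\},
\]
and its subspace $W_\gamma$ consisting of the specializations $P(\gamma)$ of those $P\in\Mat_{1\times\ell}(\ov{K}[t])$ with $P\psi=0$. The goal is $W_\gamma=V_\gamma$.

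\emph{Transport of relations along the Frobenius orbit.} Twisting coefficients and evaluating, the identity $\psi^{(-1)}=\Phi\psi$ gives
\[
\psi(\gamma)^{(-i)}=\Phi(\gamma^{(-i)})\cdots\Phi(\gamma^{(-1)})\,\psi(\gamma^{(-i)}).
\]
Since $\det\Phi(\gamma^{(-i)})\neq0$ for all $i\in\NN$, these are invertible identifications. Hence a relation $\rho\psi(\gamma)=0$ yields, for every $i\geq1$, a relation
\[
\rho^{(-i)}\,\Phi(\gamma^{(-i)})\cdots\Phi(\gamma^{(-1)})\,\psi(\gamma^{(-i)})=0
\]
at the point $\gamma^{(-i)}$, with $\ov{K}$-coefficients of controlled height and degree. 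Because $\gamma\notin\ov{\FF}_q$, the points $\gamma^{(-i)}$ are pairwise distinct.

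\emph{Auxiliary function.} I would argue by contradiction: suppose $\dim V_\gamma>\dim W_\gamma$. Using Siegel's lemma over a fixed number field $L\subset\ov{K}$ (containing $\gamma$, the coefficients of $\Phi$, and a basis of $V_\gamma$), construct a nonzero vector $E(t)=\sum_j c_j t^j$ of polynomials in $L[t]$, of bounded height and of degree $\ll N$. It is chosen so that $F(t):=E(t)\psi(t)$ vanishes to high order at many points of the orbit $\{\gamma^{(-i)}\}$; the needed linear conditions come from the previous step. Applying the functional equation repeatedly, one extends this vanishing to many further Frobenius-twisted points. One also arranges that $F$ is not identically zero, which uses that $\ell$ minus the rank of the relations module is controlled by $\dim W_\gamma$.

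\emph{Analytic versus arithmetic estimates.} On the $v$-adic side, $F$ is entire with coefficients in a finite extension of $K_v$. A Schwarz-type lemma (Newton polygon in the $v$-adic setting) then forces $|F(\gamma^{(-m)})|_v$ to be extremely small for a suitable next point, or a suitable derivative, at which $F$ is nonzero. On the arithmetic side, that value is a nonzero element of $L$ of controlled height, because $\psi$ at that point is tied to $\psi(\gamma)$ by the $\Phi$-transport and the relations. The product formula then gives a lower bound that contradicts the upper bound for $N$ large.

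\emph{Conclusion.} The contradiction shows that the relation space at $\gamma$ is exhausted by specializations of polynomial relations, i.e.\ $W_\gamma=V_\gamma$. For each $\rho\in V_\gamma$ one then obtains $P$ with $P(\gamma)=\rho$ and $P\psi=0$; this identification is made through the $\ov{K}(t)$-module of relations, which has a $\ov{K}[t]$-basis by Hermite/Smith reduction.

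\emph{Main obstacle.} I expect the main difficulty to be the $v$-adic zero estimate: bounding the growth of $F$ on $v$-adic discs and counting zeros. In the $\infty$-adic case the points $\gamma^{(-i)}$ accumulate in a way that interacts nicely with $|\cdot|_\infty$. In the $v$-adic case one must instead use the rapid decay of the coefficients, $|a_i|_v^{1/i}\to0$, to get a Jensen-type inequality strong enough to beat the height growth coming from the factors $\Phi(\gamma^{(-i)})$. My first attempt would be to work on a disc of radius $R$ containing $\gamma^{(-1)},\ldots,\gamma^{(-M)}$, whose absolute values are all $|\gamma|_v^{q^{-i}}$ and therefore bounded. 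I would then choose $R$ and $M$ depending on $N$ so that the entire-function decay dominates.
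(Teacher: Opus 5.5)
The paper gives no proof of this statement; it imports it from Chen as the $v$-adic analogue of the ABP criterion. Your template (Siegel's lemma, extrapolation along the orbit $\gamma^{(-i)}$, a Schwarz lemma, the product formula) is the natural one, but the step meant to produce the contradiction fails.

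You assert that the value of $F=E\psi$ at the chosen orbit point is a nonzero element of $L$ of controlled height. Nothing makes it algebraic. The transport identity correctly reads $\psi(\gamma)^{(-i)}=\Phi(\gamma^{(-1)})^{(-(i-1))}\Phi(\gamma^{(-2)})^{(-(i-2))}\cdots\Phi(\gamma^{(-i)})\,\psi(\gamma^{(-i)})$; your product lacks the twists and has the order reversed, though invertibility is unaffected. It only shows that $F(\gamma^{(-i)})=\sum_{j=1}^{m}\beta_{i,j}\,\omega_j^{q^{-i}}$, with $\beta_{i,j}\in\ov{K}$ of controlled height and $\omega_1,\ldots,\omega_m$ a $\ov{K}$-basis of the span of the entries of $\psi(\gamma)$. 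The $\omega_j$ are typically transcendental (e.g.\ $\widetilde{\pi}_v=\Omega_v(v^{-1})$ in this paper). The relations force certain combinations to vanish, but they never make a nonvanishing combination algebraic. So there is no algebraic number to which the product formula applies, and your Schwarz upper bound has nothing to contradict. For the same reason:
\begin{enumerate}
\item the relations let you impose only simple zeros at orbit points, not high-order vanishing, since they say nothing about derivatives of $\psi$ there;
\item twisting $F$ produces the different function $E^{(-1)}\Phi\psi$, not new zeros of $F$.
\end{enumerate}

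What is missing is a mechanism that turns smallness of these linear forms into smallness of a nonzero algebraic number. First do the standard reduction. Take $U\in\mathrm{GL}_\ell(\ov{K}[t])$ whose last rows form a basis of the saturated module $\{P\in\Mat_{1\times\ell}(\ov{K}[t]) : P\psi=0\}$. Then $U\psi=(\psi',0)^{\mathsf T}$ with $\ov{K}(t)$-linearly independent entries, $\psi'^{(-1)}=\Phi'\psi'$ and $\det\Phi'\mid\det\Phi$, and it suffices to show that the entries of $\psi'(\gamma)$ are $\ov{K}$-linearly independent. After this, the case $m=1$ can be handled along your lines, by dividing by $\omega_1^{q^{-i}}$, whose absolute value is bounded below uniformly in $i$.

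For $m\geq 2$ you need a further device. For instance, move everything to the fixed point via $F(\gamma^{(-i)})^{q^i}=F^{(i)}(\gamma)=E^{(i)}(\gamma)\,(\Phi'^{(1)}\cdots\Phi'^{(i)})(\gamma)^{-1}\psi'(\gamma)$. Then apply Liouville's inequality, via Cramer's rule, to the nonzero determinant of the coefficient vectors of $m$ such small linear forms in $\omega_1,\ldots,\omega_m$. This in turn requires a nondegeneracy estimate that exploits the $\ov{K}(t)$-independence of $\psi'$. None of this is in your sketch, and it is where the real content lies.

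By contrast, the analytic input you single out as the main obstacle is routine and works exactly as in the $\infty$-adic case. The identity $\psi=\Phi^{(1)}\psi^{(1)}$ gives $|\psi|_R\leq|\Phi^{(1)}|_R\,|\psi|_{R^{1/q}}^{\,q}$, hence $\log|\psi|_R=O(\log R\,\log\log R)$. In either setting $|\gamma^{(-i)}|=|\gamma|^{q^{-i}}\to1$.
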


In particular, an $\ov{K}$-linear relation among the entries of
$\psi(\gamma)$ can be lifted to a polynomial relation among the entries
of $\psi(t)$. 

\subsection{Auxiliary function field and polynomials}\label{subsec:auxiliary-polynomials}
Put $d:=\deg v$ and consider the function field
\[
H_v
:=
\Frac\left(\ov{K}[t,T]/(tv(T)-1)\right).
\]
Throughout what follows, the variables $T$ and $t$ satisfy
\begin{equation}\label{eq:Hv-relation}
v(T)=t^{-1}.
\end{equation}
Equivalently, we may identify
\[
H_v=\ov{K}(T),
\qquad
t=v(T)^{-1},
\]
and regard $H_v$ as a separable extension of $\ov{K}(t)$. Note that
\[
[H_v:\ov{K}(t)]
=
[\ov{K}(T):\ov{K}(v(T))]
=
d,
\]
and $\{1,T,\ldots,T^{d-1}\}$ is an $\ov{K}(t)$-basis of $H_v$.

We also consider the scalar extension
\[
\HH_v
:=
H_v\otimes_{\ov{K}(t)}\CC_v(\!(t)\!)
\cong
\CC_v(\!(t)\!)[T]/(v(T)-t^{-1}).
\]
Thus $\HH_v$ is a $d$-dimensional $\CC_v(\!(t)\!)$-algebra with basis
$\{1,T,\ldots,T^{d-1}\}$.

For every $n\in\ZZ$, we extend the $n$-fold Frobenius twist to
$H_v$ and $\HH_v$ by fixing $T$. Since $v(T)\in\FF_q[T]$, the relation
\eqref{eq:Hv-relation} is preserved by Frobenius twisting, and hence
we obtain automorphisms
\[
(-)^{(n)}:H_v\longrightarrow H_v,
\qquad
(-)^{(n)}:\HH_v\longrightarrow\HH_v.
\]
Explicitly, if
\[
F=\sum_{j=0}^{d-1}f_j(t)T^j\in\HH_v,
\]
then
\[
F^{(n)}
=
\sum_{j=0}^{d-1}f_j^{(n)}(t)T^j.
\]

For $a\in\ov{K}$, define
\[
\Delta_v(T,a)
:=
\frac{v(T)-v(a)}{T-a}
\in\ov{K}[T].
\]
Since $v(T)\in\FF_q[T]$, for every $n\in\ZZ$ we have
\begin{equation}\label{eq:Delta-twist}
\Delta_v(T,a)^{(n)}
=
\Delta_v(T,a^{q^n}).
\end{equation}
We also put
\[
\delta_v(T):=\Delta_v(T,\theta).
\]

For every $i\geq0$, define
\begin{equation}\label{eq:Dvi}
\Dcal_{v,0}(T):=1,
\qquad
\Dcal_{v,i}(T)
:=
\prod_{j=1}^i
\Delta_v(T,\theta^{q^j})
\quad (i\geq1).
\end{equation}
The following identities will be used repeatedly.

\begin{lem}\label{lem:Dvi}
For every $i\geq1$, we have
\begin{equation}\label{eq:Dvi-twist}
\Dcal_{v,i}^{(-1)}(T)
=
\delta_v(T)\Dcal_{v,i-1}(T).
\end{equation}
Moreover,
\begin{equation}\label{eq:Dvi-theta}
\Dcal_{v,i}(\theta)
=
\prod_{j=1}^i
\frac{v-v^{q^j}}
{\theta-\theta^{q^j}}.
\end{equation}
\end{lem}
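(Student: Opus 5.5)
Both identities follow directly from the definitions, using only the twisting rule \eqref{eq:Delta-twist} and the definition of $\Delta_v(T,a)$.

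\emph{The twist identity \eqref{eq:Dvi-twist}.} The twist $(-)^{(-1)}$ is a ring automorphism of $H_v$ fixing $T$, so it commutes with finite products. Applying it factor by factor to \eqref{eq:Dvi} and using \eqref{eq:Delta-twist} with $n=-1$ gives
\[
\Dcal_{v,i}^{(-1)}(T)=\prod_{j=1}^{i}\Delta_v\bigl(T,\theta^{q^{j-1}}\bigr)=\prod_{j=0}^{i-1}\Delta_v\bigl(T,\theta^{q^{j}}\bigr).
\]
Splitting off the $j=0$ factor $\Delta_v(T,\theta)=\delta_v(T)$ leaves $\prod_{j=1}^{i-1}\Delta_v(T,\theta^{q^j})=\Dcal_{v,i-1}(T)$. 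This also covers $i=1$, where the remaining product is empty and equals $\Dcal_{v,0}=1$.

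\emph{The value \eqref{eq:Dvi-theta}.} Since $\Dcal_{v,i}(\theta)=\prod_{j=1}^i\Delta_v(\theta,\theta^{q^j})$, it suffices to evaluate each factor. For $a\neq\theta$, the polynomial $\Delta_v(T,a)=(v(T)-v(a))/(T-a)$ evaluated at $T=\theta$ is simply the quotient $(v(\theta)-v(a))/(\theta-a)$. Take $a=\theta^{q^j}$ with $j\geq1$. Then $\theta^{q^j}\neq\theta$, since $\theta$ is transcendental over $\FF_q$. Moreover $v$ has coefficients in $\FF_q$, so $v(\theta^{q^j})=v(\theta)^{q^j}=v^{q^j}$. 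Hence
\[
\Delta_v(\theta,\theta^{q^j})=\frac{v-v^{q^j}}{\theta-\theta^{q^j}},
\]
and taking the product over $j=1,\dots,i$ gives \eqref{eq:Dvi-theta}.

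There is no real obstacle here. The only points that need care are checking that the quotient formula for $\Delta_v$ is legitimate at $T=\theta$, which holds because $\theta\neq\theta^{q^j}$, and handling the index shift in the first identity, including the empty-product case $i=1$.
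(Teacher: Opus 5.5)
Your proposal is correct and follows the paper's proof essentially verbatim: you twist factor by factor via \eqref{eq:Delta-twist}, shift the index and split off $\Delta_v(T,\theta)=\delta_v(T)$, and then evaluate each factor at $T=\theta$ using $v(\theta^{q^j})=v^{q^j}$. Your remarks on the empty product at $i=1$ and on $\theta\neq\theta^{q^j}$ are small details the paper leaves implicit.
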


\begin{proof}
By \eqref{eq:Delta-twist}, we have
\[
\begin{aligned}
\Dcal_{v,i}^{(-1)}(T)
=
\prod_{j=1}^i
\Delta_v(T,\theta^{q^{j-1}})=
\Delta_v(T,\theta)
\prod_{j=1}^{i-1}
\Delta_v(T,\theta^{q^j})=
\delta_v(T)\Dcal_{v,i-1}(T),
\end{aligned}
\]
which proves \eqref{eq:Dvi-twist}. On the other hand, since $v\in\FF_q[\theta]$,
\[
v(\theta^{q^j})=v(\theta)^{q^j}=v^{q^j}.
\]
Therefore, we obtain
\[
\Delta_v(\theta,\theta^{q^j})
=
\frac{v-v^{q^j}}
{\theta-\theta^{q^j}},
\]
and \eqref{eq:Dvi-theta} follows immediately.
\end{proof}

\begin{rmk}
If $\deg v=1$, then
\[
v(T)-v(a)=T-a,
\]
and hence
\[
\Delta_v(T,a)=1,
\qquad
\delta_v(T)=1,
\qquad
\Dcal_{v,i}(T)=1
\]
for every $i\geq0$. Thus these auxiliary polynomials disappear in the
degree one case.
\end{rmk}

\subsection{The series \texorpdfstring{$\mathfrak{L}_{\bs,\bu}^{[v]}(T,t)$}{Lv(T,t)}}
\label{subsec:L-series}
Taking $\alpha=v$ in the notation of \S\ref{subsec:notations}, we write
$\Omega_v(t)$ and
\[
\widetilde{\pi}_v:=\Omega_v(v^{-1}).
\]

\begin{df}
Let $\bs=(s_1,\ldots,s_r)\in\Ical$ be non-empty and let
$\bu=(u_1,\ldots,u_r)\in
(\ov{K}^{\times})^r\cap\DD_{\bs,v}^{\Conv}$.
For $0\leq\ell\leq r$, put
\[
w_\ell:=s_1+\cdots+s_\ell,
\qquad
w_0:=0,
\qquad
w:=w_r=\wt(\bs).
\]
We define
\begin{equation}\label{eq:L-series}
\mathfrak{L}_{\bs,\bu}^{[v]}(T,t)
:=
\sum_{i_1>\cdots>i_r\geq0}
\prod_{\ell=1}^r
\left(
(\Omega_v^{s_\ell}u_\ell)^{(i_\ell)}
\Dcal_{v,i_\ell}(T)^{s_\ell}
t^{i_\ell s_\ell}
\right).
\end{equation}
For the empty index, we set
\[
\mathfrak{L}_{\varnothing,\varnothing}^{[v]}(T,t):=1.
\]
\end{df}

We first verify that the series in \eqref{eq:L-series} indeed defines
an element of $\HH_v$.

\begin{lem}\label{lem:L-in-Kv}
For every non-empty
$\bs\in\Ical$ and
$\bu\in(\ov{K}^{\times})^r\cap\DD_{\bs,v}^{\Conv}$,
the series
$\mathfrak{L}_{\bs,\bu}^{[v]}(T,t)$ converges in $\HH_v$
with respect to the $t$-adic topology.
\end{lem}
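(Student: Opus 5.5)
We show that each summand of \eqref{eq:L-series} is an element of $\HH_v$, and that for every fixed $N$ only finitely many summands fail to lie in $t^N\CC_v[\![t]\!]\otimes\{1,\ldots,T^{d-1}\}$, after reducing modulo $v(T)-t^{-1}$. The $t$-adic topology on $\HH_v$ is the one coming from the identification $\HH_v\cong\bigoplus_{j=0}^{d-1}\CC_v(\!(t)\!)T^j$, so convergence can be checked coordinatewise. The one point needing care is that $\Dcal_{v,i}(T)$ is a polynomial in $T$ of degree $i(d-1)$. Its reduction to the basis $\{1,\ldots,T^{d-1}\}$ uses $v(T)=t^{-1}$, which introduces \emph{negative} powers of $t$. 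The main obstacle is therefore to control these poles against the factor $t^{i s}$.

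\textbf{Step 1 (pole order of $\Dcal_{v,i}$).} Each factor $\Delta_v(T,\theta^{q^j})$ is a polynomial in $T$ of degree $d-1$ with coefficients in $\ov{K}$. Any polynomial $f(T)\in\ov{K}[T]$ of degree $m$ can be written uniquely as
\[
f(T)=\sum_k g_k(T)\,v(T)^k,\qquad \deg g_k<d,
\]
with $k\le\lfloor m/d\rfloor$; this is the $v(T)$-adic expansion. In $\HH_v$ we have $v(T)^k=t^{-k}$. Hence $f(T)\in\bigoplus_j t^{-\lfloor m/d\rfloor}\ov{K}[t]\,T^j$. Applying this with $m=i(d-1)s$, we get that $\Dcal_{v,i}(T)^s$ has $t$-order at least $-\lfloor i(d-1)s/d\rfloor\ge -is+is/d$. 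It follows that
\[
\Dcal_{v,i}(T)^s\,t^{is}\in t^{\lceil is/d\rceil}\,\ov{K}[t]\otimes\{1,\ldots,T^{d-1}\},
\]
and the $t$-order grows at least like $is/d\ge i/d$.

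\textbf{Step 2 (twisted Omega factors).} By \eqref{eq:v-adic AT}, $\Omega_v\in\Ecal_v\subset\CC_v[\![t]\!]$. Frobenius twisting preserves $\CC_v[\![t]\!]$, so $(\Omega_v^{s}u)^{(i)}=\Omega_v^{(i)s}u^{q^i}\in\CC_v[\![t]\!]$. No growth estimate is needed here for $t$-adic convergence; each such factor simply has nonnegative $t$-order. Note that the convergence conditions $|u_1|_v<1$ and $|u_\ell|_v\le1$ are irrelevant for $t$-adic convergence itself. They only matter later, when one evaluates at $t=v^{-q^N}$ or compares with $\Li_{\bs}(\bu)_v$.

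\textbf{Step 3 (assembling).} Take a multi-index $i_1>\cdots>i_r\ge0$. Since $\HH_v$ is a $\CC_v(\!(t)\!)$-algebra, the product of the $r$ factors lies in $\HH_v$. By Steps 1 and 2, its $t$-order is at least
\[
\sum_\ell \lceil i_\ell s_\ell/d\rceil\ \ge\ \lceil i_1/d\rceil .
\]
Products of reduced elements stay reduced: multiplying basis elements $T^a T^b$ with $a+b<2d$ only introduces $v(T)$ to the power at most $1$, and this costs at most one extra $t^{-1}$. To avoid this bookkeeping, apply Step 1 once to the whole polynomial $\prod_\ell \Dcal_{v,i_\ell}(T)^{s_\ell}$, which has degree $(d-1)\sum_\ell i_\ell s_\ell$. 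This gives $t$-order at least $\sum_\ell i_\ell s_\ell/d\ge i_1/d$ after multiplying by $t^{\sum i_\ell s_\ell}$. For each bound $N$, only finitely many tuples with $i_1<dN$ exist. Hence the partial sums form a Cauchy sequence in each coordinate $\CC_v(\!(t)\!)$, which is $t$-adically complete, and the series converges in $\HH_v$.
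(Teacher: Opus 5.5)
Your proof is correct and follows the paper's argument. You reduce the full product $\prod_\ell \Dcal_{v,i_\ell}(T)^{s_\ell}$, of degree at most $(d-1)n$ with $n=\sum_\ell i_\ell s_\ell$, to the basis $\{1,\ldots,T^{d-1}\}$ with pole order at most $\lfloor (d-1)n/d\rfloor$; together with the factor $t^n$ and the $\Omega$-factors lying in $\CC_v[\![t]\!]$, every basis coefficient has $t$-order at least $\lceil n/d\rceil$, which tends to infinity with $i_1$. The tentative per-factor bound $\sum_\ell\lceil i_\ell s_\ell/d\rceil$ in your Step 3 is not justified as stated, since multiplying reduced factors can cost extra powers of $t^{-1}$, but you correctly discard it and apply Step 1 to the whole product, which is exactly what the paper does.
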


\begin{proof}
Consider a summand of \eqref{eq:L-series} indexed by
$i_1>\cdots>i_r\geq0$, and put
$n:=i_1s_1+\cdots+i_rs_r$.
Writing
\[
v(T)=T^d+a_{d-1}T^{d-1}+\cdots+a_0
\qquad (a_i\in\FF_q),
\]
the relation $v(T)=t^{-1}$ gives
\[
T^d
=
t^{-1}-a_{d-1}T^{d-1}-\cdots-a_0.
\]
Thus, a power $T^m$ can be reduced to the basis
$\{1,T,\ldots,T^{d-1}\}$ using coefficients whose pole order at $t=0$
is at most $\lfloor m/d\rfloor$.
Since
$\deg_T\Dcal_{v,i}=i(d-1)$,
we have
\[
\deg_T\prod_{\ell=1}^r
\Dcal_{v,i_\ell}(T)^{s_\ell}
\]
is at most $(d-1)n$. Hence, after reducing this product to the basis
$\{1,T,\ldots,T^{d-1}\}$, each of its coefficients has pole order at most
\[
\left\lfloor\frac{(d-1)n}{d}\right\rfloor.
\]
On the other hand, the same summand contains the factor $t^n$, while
$(\Omega_v^{s_\ell}u_\ell)^{(i_\ell)}
\in\CC_v[\![t]\!]$. It follows that every basis coefficient of this summand has
$t$-adic order at least
\[
n-\left\lfloor\frac{(d-1)n}{d}\right\rfloor
=
\left\lceil\frac{n}{d}\right\rceil.
\]
Note that $n\to\infty$ as $i_1\to\infty$ and the
$t$-adic orders of all basis coefficients tend to infinity, so the
summands tend to zero in $\HH_v$. Therefore
$\mathfrak{L}_{\bs,\bu}^{[v]}(T,t)$ converges in $\HH_v$.
\end{proof}

The factors $\Dcal_{v,i}(T)$ are chosen so that
\eqref{eq:L-series} satisfies the following Frobenius difference equation.

\begin{prop}\label{prop:L-Frobenius}
Let $\bs=(s_1,\ldots,s_r)\in\Ical$ be non-empty and $\bu=(u_1,\ldots,u_r)\in(\ov{K}^{\times})^r\cap\DD_{\bs,v}^{\Conv}$. Put $\bs'=(s_1,\ldots,s_{r-1})$ and
$\bu'=(u_1,\ldots,u_{r-1})$. Then
\begin{align}
\left(
\mathfrak{L}_{\bs,\bu}^{[v]}(T,t)
\right)^{(-1)}
={}&
u_r^{q^{-1}}
(1-vt)^{s_r}\Omega_v^{s_r}
\left(t\delta_v(T)\right)^{w_{r-1}}
\mathfrak{L}_{\bs',\bu'}^{[v]}(T,t)
\label{eq:L-Frobenius}+
\left(t\delta_v(T)\right)^w
\mathfrak{L}_{\bs,\bu}^{[v]}(T,t).
\nonumber
\end{align}
\end{prop}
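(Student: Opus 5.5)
Apply the twist $(-)^{(-1)}$ termwise to the series \eqref{eq:L-series}. This is legitimate because the twist is a continuous automorphism of $\HH_v$ for the $t$-adic topology: it acts on coefficients only, fixing $t$ and $T$. By Lemma~\ref{lem:L-in-Kv} the series converges in $\HH_v$, so twisting commutes with the infinite sum. Then split the resulting sum according to whether the smallest index $i_r$ is zero or positive.

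For a single summand indexed by $i_1>\cdots>i_r\geq0$, each factor with $i_\ell\geq1$ is handled by Lemma~\ref{lem:Dvi}. We have
\[
(\Omega_v^{s_\ell}u_\ell)^{(i_\ell-1)}\,\Dcal_{v,i_\ell}^{(-1)}(T)^{s_\ell}\,t^{i_\ell s_\ell}
=(t\delta_v(T))^{s_\ell}\,(\Omega_v^{s_\ell}u_\ell)^{(i_\ell-1)}\,\Dcal_{v,i_\ell-1}(T)^{s_\ell}\,t^{(i_\ell-1)s_\ell}.
\]
This is exactly the factor indexed by $i_\ell-1$, multiplied by $(t\delta_v(T))^{s_\ell}$.

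\emph{Terms with $i_r\geq1$.} Every factor shifts, so the summand becomes $(t\delta_v(T))^{w}$ times the summand of $\mathfrak{L}_{\bs,\bu}^{[v]}$ indexed by $(i_1-1,\ldots,i_r-1)$. The shift $i_\ell\mapsto i_\ell-1$ is a bijection from the set of strictly decreasing tuples with $i_r\geq1$ onto all strictly decreasing tuples with $i_r\geq0$. Hence these terms sum to $(t\delta_v(T))^w\mathfrak{L}_{\bs,\bu}^{[v]}(T,t)$.

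\emph{Terms with $i_r=0$.} The last factor is $\Omega_v^{s_r}u_r$ with $\Dcal_{v,0}=1$ and $t^0$. Its twist is $u_r^{q^{-1}}(\Omega_v^{(-1)})^{s_r}$. By \eqref{eq:v-adic AT} with $\alpha=v$, this equals $u_r^{q^{-1}}(1-vt)^{s_r}\Omega_v^{s_r}$. The remaining indices satisfy $i_1>\cdots>i_{r-1}\geq1$, and the same shifting argument gives $(t\delta_v(T))^{w_{r-1}}\mathfrak{L}_{\bs',\bu'}^{[v]}(T,t)$. When $r=1$ the inner sum is empty, and the convention $\mathfrak{L}_{\varnothing,\varnothing}^{[v]}=1$ together with $w_0=0$ handles it. Adding the two pieces yields \eqref{eq:L-Frobenius}.

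The only point needing care is the justification that twisting commutes with the infinite sum and with multiplication by $T$-polynomials inside $\HH_v$. It does so because the twist fixes $T$ and the relation $v(T)=t^{-1}$ has $\FF_q$-coefficients. Since the twist is an automorphism, the $t$-adic orders of the basis coefficients are unchanged, and convergence follows as in Lemma~\ref{lem:L-in-Kv}. Beyond this, the argument is a direct reindexing computation.
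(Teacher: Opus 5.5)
Your proposal is correct and is the same argument as the paper's: twist termwise, split according to $i_r=0$ versus $i_r\geq1$, apply \cref{lem:Dvi} and \eqref{eq:v-adic AT}, and reindex by $i_\ell\mapsto i_\ell-1$. Your remark that $(-)^{(-1)}$ is a $t$-adically continuous ring automorphism of $\HH_v$, so that it commutes with the convergent sum, makes explicit a point the paper leaves implicit; one small wording fix is that for $r=1$ the sum over $i_1>\cdots>i_{r-1}$ is not empty but consists of the single empty-tuple term equal to $1$, which is exactly what the convention $\mathfrak{L}_{\varnothing,\varnothing}^{[v]}=1$ encodes.
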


\begin{proof}
Apply $(-)^{(-1)}$ to \eqref{eq:L-series} and separate
the summation into the cases $i_r=0$ and $i_r\geq1$. 

If $i_r\geq1$, then $i_\ell\geq1$ for every $\ell$. By
\cref{lem:Dvi},
$\Dcal_{v,i_\ell}^{(-1)}(T)
=\delta_v(T)\Dcal_{v,i_\ell-1}(T)$.
After replacing each $i_\ell$ by $i_\ell-1$, we obtain
\[
\left(t\delta_v(T)\right)^w
\mathfrak{L}_{\bs,\bu}^{[v]}(T,t).
\]
If $i_r=0$, then $i_\ell\geq1$ for every $\ell<r$. Shifting
$i_\ell$ to $i_\ell-1$ for $\ell<r$ gives
$\left(t\delta_v(T)\right)^{w_{r-1}}
\mathfrak{L}_{\bs',\bu'}^{[v]}(T,t)$,
while
\[
(\Omega_v^{s_r}u_r)^{(-1)}
=
u_r^{q^{-1}}
(1-vt)^{s_r}\Omega_v^{s_r}
\]
by \eqref{eq:v-adic AT}. Combining the two cases proves the result.
\end{proof}

Specializing at $T=\theta$ recovers the corresponding $v$-adic Carlitz multiple polylogarithm.

\begin{prop}\label{prop:L-specialization}
Let $\bs=(s_1,\ldots,s_r)\in\Ical$ be non-empty and $\bu=(u_1,\ldots,u_r)\in(\ov{K}^{\times})^r\cap\DD_{\bs,v}^{\Conv}$. We have
\begin{equation}\label{eq:L-specialization}
\mathfrak{L}_{\bs,\bu}^{[v]}(\theta,v^{-1})
=
\widetilde{\pi}_v^{\,w}
\Li_{\bs}(\bu)_v.
\end{equation}
\end{prop}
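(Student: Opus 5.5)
The plan is to evaluate the series \eqref{eq:L-series} at $T=\theta$, $t=v^{-1}$ term by term, after first checking that this specialization is compatible with the convergence in \cref{lem:L-in-Kv}. Note that $T=\theta$ forces $t=v(\theta)^{-1}=v^{-1}$, so the point $(\theta,v^{-1})$ lies on the curve $v(T)=t^{-1}$. The substitution is therefore legitimate on the level of $H_v$. Analytically, I will not use the $t$-adic expansion in the basis $\{1,\dots,T^{d-1}\}$. Instead I will work with the original form of each summand, which is a polynomial in $T$ times an entire series in $t$. Specializing directly at $T=\theta$, $t=v^{-1}$ gives a series in $\CC_v$, and its convergence must be checked $v$-adically.

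For a single summand, I first use \eqref{eq:v-adic AT} with $\alpha=v$. This gives
\[
(\Omega_v^{s}u)^{(i)}(v^{-1})=u^{q^i}\,\widetilde{\pi}_v^{\,s}\prod_{j=1}^{i}(1-v^{q^j-1})^{-s},
\]
since $\Omega_v^{(i)}=\Omega_v/\prod_{j=1}^i(1-v^{q^j}t)$. Next, \cref{lem:Dvi} gives
\[
\Dcal_{v,i}(\theta)^{s}\,v^{-is}
=\prod_{j=1}^i\Bigl(\frac{v-v^{q^j}}{\theta-\theta^{q^j}}\Bigr)^{s}v^{-is}
=\prod_{j=1}^i\frac{(1-v^{q^j-1})^{s}}{(\theta-\theta^{q^j})^{s}}
=\frac{\prod_{j=1}^i(1-v^{q^j-1})^{s}}{L_i^{s}}.
\]
Multiplying the two, the factors $(1-v^{q^j-1})^{\pm s}$ cancel. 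The $\ell$-th factor therefore equals $\widetilde{\pi}_v^{\,s_\ell}u_\ell^{q^{i_\ell}}/L_{i_\ell}^{s_\ell}$. Taking the product over $\ell$ gives $\widetilde{\pi}_v^{\,w}$ times the general term of $\Li_{\bs}(\bu)$. Summing yields \eqref{eq:L-specialization}, provided the termwise specialization is justified.

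The main obstacle is this justification: the $t$-adic limit in $\HH_v$ must commute with evaluation at $t=v^{-1}$, where $|v^{-1}|_v>1$. My approach is to show that the specialized series converges absolutely in $\CC_v$, and that the $(T,t)$-series converges on a neighbourhood of the point in a sense that permits evaluation. Concretely, I would regard $\mathfrak{L}$ as a series of functions of $t$ on the region $|t|_v\le |v|_v^{-1}$, after substituting for $T$ a root of $v(T)=t^{-1}$ near $\theta$. The computation above shows that each term is $\Omega_v(t)^{w}$ times a rational function whose specialized value is $u_\ell^{q^{i_\ell}}/L_{i_\ell}^{s_\ell}$ up to units. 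Since $|L_i|_v=|v|_v^{\,\lfloor i/d\rfloor+\cdots}$ only mildly, the decay comes from $|u_1|_v<1$, $|u_\ell|_v\le1$, which gives $|u_1^{q^{i_1}}|_v\to0$ doubly exponentially. This dominates the polynomial loss from the $L_i$, uniformly near $t=v^{-1}$. Uniform convergence of entire-type functions then justifies termwise evaluation, and the identity follows by identifying the $t$-adic limit with this analytic limit, since they agree as formal expansions.
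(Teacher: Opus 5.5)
Your termwise computation is exactly the paper's proof: it combines $\Omega_v^{(i)}(v^{-1})v^{-i}=\widetilde{\pi}_v/\prod_{j=1}^{i}(v-v^{q^j})$ with \eqref{eq:Dvi-theta} to get $\widetilde{\pi}_v/L_i$ for each factor and then sums, and it does not address the analytic justification you flag at all. Your sketch of that justification can be tightened in three ways. A branch $T(t)$ of $v(T)=t^{-1}$ near $\theta$ cannot be analytic on the whole disk $|t|_v\le|v|_v^{-1}$, since it has a pole at $t=0$; the clean route is instead the uniform convergence on closed disks of the basis coefficients of $\mathfrak{L}_{\bs,\bu}^{[v]}$ from the proof of \cref{prop:trace-entire}, using that each summand's reduction to $\{1,T,\ldots,T^{d-1}\}$ becomes a polynomial identity in $T$ once $t^{-1}$ is replaced by $v(T)$. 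Finally, the loss from $1/L_i$ is geometric rather than polynomial, since $|L_i|_v=|v|_v^{\lfloor i/d\rfloor}$, though the doubly exponential decay of $|u_1|_v^{q^{i_1}}$ still dominates.
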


\begin{proof}
By \eqref{eq:v-adic AT}, for every $i\geq0$,
\[
\Omega_v^{(i)}(t)
=
\frac{\Omega_v(t)}
{\prod_{j=1}^i(1-v^{q^j}t)},
\]
and hence
\[
\Omega_v^{(i)}(v^{-1})v^{-i}
=
\frac{\widetilde{\pi}_v}
{\prod_{j=1}^i(v-v^{q^j})}.
\]
By \cref{lem:Dvi},
\[
\Dcal_{v,i}(\theta)
=
\prod_{j=1}^i
\frac{v-v^{q^j}}
{\theta-\theta^{q^j}}.
\]
Therefore, we obtain
\[
\Omega_v^{(i)}(v^{-1})v^{-i}
\Dcal_{v,i}(\theta)
=
\frac{\widetilde{\pi}_v}{L_i}.
\]
Applying this identity to each factor in \eqref{eq:L-series} gives
\eqref{eq:L-specialization}.
\end{proof}

\begin{rmk}
If $\deg v=1$, then $\delta_v(T)=1$ and
$\Dcal_{v,i}(T)=1$ for every $i\geq0$. Hence
$\mathfrak{L}_{\bs,\bu}^{[v]}(T,t)$ is independent of $T$ and coincides
with Chen's series in \cite[Definition~3.2.1]{ChenABP}.
\end{rmk}

\section{Frobenius Systems}
\subsection{Trace coordinates}
Inside $H_v$, we have
\begin{equation}\label{eq:tdelta}
t\delta_v(T)
=
t\frac{v(T)-v}{T-\theta}
=
\frac{1-vt}{T-\theta}.
\end{equation}
Let $M_v(t)$ be the matrix representing multiplication by
$t\delta_v(T)$ with respect to the ordered basis
$\{1,T,\ldots,T^{d-1}\}$. That is,
\begin{equation}\label{eq:Mv-definition}
t\delta_v(T)T^j
=
\sum_{i=0}^{d-1}(M_v(t))_{ij}T^i
\qquad
(0\leq j\leq d-1).
\end{equation}

\begin{lem}\label{lem:Mv}
We have
\[
M_v(t)\in\Mat_d(K[t])
\]
and
\begin{equation}\label{eq:det-Mv}
\det M_v(t)
=
(-1)^{d+1}t(1-vt)^{d-1}.
\end{equation}
\end{lem}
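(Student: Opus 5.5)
Two things are needed: polynomiality of the entries of $M_v(t)$, and the value of the determinant. Both come from the norm of $t\delta_v(T)$ in the degree $d$ extension $H_v/\ov{K}(t)$.

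\emph{Polynomiality.} Write $\delta_v(T)=\sum_{k=0}^{d-1}c_kT^k$. Since $v(T)-v(\theta)$ has degree $d$ in $T$, the quotient $\delta_v(T)$ is monic of degree $d-1$ with $c_k\in K$. Then $t\delta_v(T)T^j$ has $T$-degree at most $2d-2$. To rewrite it in the basis we use $T^d=t^{-1}-a_{d-1}T^{d-1}-\cdots-a_0$, and any $T^m$ with $m\le 2d-2<2d$ needs only one such substitution. So each coordinate of $T^m$ is a polynomial in $t^{-1}$ of degree at most $\lfloor m/d\rfloor\le 1$. Multiplying by the outer factor $t$ clears this pole. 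Hence $M_v(t)\in\Mat_d(K[t])$, with every entry of degree at most $1$ in $t$.

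\emph{Determinant.} $\det M_v(t)$ is the norm $N_{H_v/\ov{K}(t)}(t\delta_v(T))$. By \eqref{eq:tdelta},
\[
N(t\delta_v(T))=\frac{N(1-vt)}{N(T-\theta)}=\frac{(1-vt)^d}{N(T-\theta)},
\]
because $1-vt\in\ov{K}(t)$. The minimal polynomial of $T$ over $\ov{K}(t)$ is $f(X)=v(X)-t^{-1}$. It is irreducible: by Eisenstein at the prime $t^{-1}$ of $\ov{K}[t^{-1}]$, or simply because the degree of the extension is $d$. Therefore
\[
N(T-\theta)=(-1)^d f(\theta)=(-1)^d\bigl(v-t^{-1}\bigr)=(-1)^{d+1}t^{-1}(1-vt).
\]
Substituting gives $\det M_v(t)=(-1)^{d+1}t(1-vt)^{d-1}$, as claimed.

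The only point requiring care is the sign and normalization in the norm formula $N(\alpha-T)=f(\alpha)$ for monic $f$, so that $N(T-\theta)=(-1)^df(\theta)$. As a check, take $d=1$: then $M_v=t\delta_v=t$, and the formula gives $(-1)^2t(1-vt)^0=t$. The identification of $\det M_v(t)$ with the norm is standard, since $M_v(t)$ is the matrix of multiplication by $t\delta_v(T)$ in a $\ov{K}(t)$-basis of $H_v$.
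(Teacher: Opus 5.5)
Your proposal is correct and follows the paper's proof. Polynomiality comes from reducing $T^m$ ($m\le 2d-2$) via $v(T)=t^{-1}$, which creates at most a simple pole that the factor $t$ clears. The determinant is the norm of $t\delta_v(T)=(1-vt)/(T-\theta)$, computed from the roots of $v(X)-t^{-1}$ with evaluation at $X=\theta$.

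There are two small slips, neither of them load-bearing:
\begin{enumerate}
\item Eisenstein at $t^{-1}$ never applies for $d\geq 2$. The constant term $a_0-t^{-1}$ has $a_0\neq 0$, so it is not divisible by $t^{-1}$. Rely instead on your other justification, $[H_v:\ov{K}(t)]=d$.
\item Reducing $T^m$ can take several substitutions, not just one. What matters is your pole-order bound $\lfloor m/d\rfloor\le 1$, and that bound is correct.
\end{enumerate}
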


\begin{proof}
Write
\[
v(T)=T^d+a_{d-1}T^{d-1}+\cdots+a_0
\qquad
(a_i\in\FF_q).
\]
By \eqref{eq:Hv-relation},
\begin{equation}\label{eq:T-reduction}
tT^d
=
1-t(a_{d-1}T^{d-1}+\cdots+a_0).
\end{equation}
Since $\delta_v(T)\in K[T]$ has degree $d-1$, repeated use of
\eqref{eq:T-reduction} expresses $t\delta_v(T)T^j$ in the basis
$\{1,T,\ldots,T^{d-1}\}$ with coefficients in $K[t]$. Hence
$M_v(t)\in\Mat_d(K[t])$.

For the determinant, multiplication by $t\delta_v(T)$ on $H_v$ has
determinant equal to its field norm. Let $\beta$ run through the roots of
$v(X)-t^{-1}$, which is the minimal polynomial of $T$ over $\ov{K}(t)$. Then
\[
\begin{aligned}
\det M_v(t)
=
\Norm_{H_v/\ov{K}(t)}
\left(t\delta_v(T)\right)=
t^d
\prod_{v(\beta)=t^{-1}}
\frac{v(\beta)-v}{\beta-\theta}=
t^d
\frac{(t^{-1}-v)^d}
{\prod_{\beta}(\beta-\theta)}.
\end{aligned}
\]
Since
\[
v(X)-t^{-1}
=
\prod_{\beta}(X-\beta),
\]
evaluation at $X=\theta$ gives
\[
\prod_{\beta}(\beta-\theta)
=
(-1)^d(v-t^{-1})
=
(-1)^{d+1}(t^{-1}-v).
\]
Therefore, we obtain
\[
\det M_v(t)
=
(-1)^{d+1}t(1-vt)^{d-1}.
\]
\end{proof}

Recall that 
\[
\HH_v
=
H_v\otimes_{\ov{K}(t)}\CC_v(\!(t)\!)
\cong
\CC_v(\!(t)\!)[T]/(v(T)-t^{-1}).
\]
We note that the field trace
\[
\Tr_{H_v/\ov{K}(t)}:
H_v\longrightarrow\ov{K}(t)
\]
extends uniquely to a $\CC_v(\!(t)\!)$-linear map
\[
\Tr:
\HH_v\longrightarrow\CC_v(\!(t)\!).
\]
For $F\in\HH_v$, define
\begin{equation}\label{eq:trace-coordinates}
\bT(F)
:=
\begin{pmatrix}
\Tr(F)\\
\Tr(TF)\\
\vdots\\
\Tr(T^{d-1}F)
\end{pmatrix}
\in
\Mat_{d\times1}\left(\CC_v(\!(t)\!)\right).
\end{equation}

The trace coordinates are compatible with multiplication by
$t\delta_v(T)$ and with Frobenius twisting.

\begin{lem}\label{lem:trace-coordinates}
For every $m\geq0$ and $F\in\HH_v$, we have
\begin{equation}\label{eq:trace-multiplication}
\bT\left((t\delta_v(T))^mF\right)
=
\left(M_v(t)^{\mathsf T}\right)^m\bT(F).
\end{equation}
Moreover,
\begin{equation}\label{eq:trace-twist}
\bT(F)^{(-1)}
=
\bT\left(F^{(-1)}\right).
\end{equation}
\end{lem}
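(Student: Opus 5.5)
The plan is to prove \eqref{eq:trace-multiplication} first for $m=1$ and then iterate, and to deduce \eqref{eq:trace-twist} from the fact that twisting fixes $T$ and acts coefficientwise.

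For $m=1$, fix $F\in\HH_v$ and $0\leq k\leq d-1$. Since $\Tr$ is $\CC_v(\!(t)\!)$-linear and $t\delta_v(T)$ commutes with $T^k$, we have $\Tr\left(T^k\,t\delta_v(T)F\right)=\Tr\left(t\delta_v(T)\,T^kF\right)$. Expand $T^k\,t\delta_v(T)$ in the basis using \eqref{eq:Mv-definition}:
\[
t\delta_v(T)T^k=\sum_{i=0}^{d-1}(M_v(t))_{ik}T^i .
\]
Multiply by $F$ and take traces. By linearity (the entries of $M_v(t)$ lie in $K[t]\subset\CC_v(\!(t)\!)$) this gives
\[
\Tr\left(T^k\,t\delta_v(T)F\right)=\sum_{i}(M_v(t))_{ik}\Tr(T^iF)=\left(M_v(t)^{\mathsf T}\bT(F)\right)_k .
\]
Hence $\bT(t\delta_v(T)F)=M_v(t)^{\mathsf T}\bT(F)$. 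Induction on $m$, applying the $m=1$ case to $(t\delta_v(T))^{m-1}F$, then gives \eqref{eq:trace-multiplication}. The case $m=0$ is trivial.

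For \eqref{eq:trace-twist}, I would make the trace explicit. For $G=\sum_j g_j(t)T^j\in\HH_v$ we have $\Tr(G)=\sum_j g_j(t)\Tr(T^j)$. Here $\Tr(T^j)=\Tr_{H_v/\ov{K}(t)}(T^j)$ is the power sum of the roots of $v(X)-t^{-1}$. By Newton's identities these power sums lie in $\FF_q[t^{-1}]\subset\FF_q(\!(t)\!)$, since the coefficients of $v(X)-t^{-1}$ lie in $\FF_q[t^{-1}]$. The twist fixes $\FF_q(\!(t)\!)$ because it is an $\FF_q[t]$-algebra automorphism that acts on coefficients by $q^{-1}$-powers, and elements of $\FF_q$ are fixed by that. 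It follows that
\[
\Tr(G)^{(-1)}=\sum_j g_j^{(-1)}\Tr(T^j)=\Tr(G^{(-1)}).
\]
Applying this with $G=T^kF$ and using $(T^kF)^{(-1)}=T^kF^{(-1)}$, which holds because the twist fixes $T$, gives \eqref{eq:trace-twist} componentwise.

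The only step needing real care is justifying that $\Tr(T^j)$ is twist-invariant, i.e.\ that it has $\FF_q$-coefficients. Newton's identities settle this directly. Alternatively, the trace of multiplication by $T^j$ is computed from the matrix of $T$, which is the companion matrix of $v(X)-t^{-1}$ and has entries in $\FF_q[t^{-1}]$. The remaining concern is well-definedness of the identification of $\HH_v$ as a free module with basis $\{1,\ldots,T^{d-1}\}$, which is given.
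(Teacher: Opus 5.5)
Your proof is correct. The argument for \eqref{eq:trace-multiplication} is the paper's: expand $t\delta_v(T)T^k$ via \eqref{eq:Mv-definition}, use $\CC_v(\!(t)\!)$-linearity of $\Tr$, read off the $k$-th entry of $M_v(t)^{\mathsf T}\bT(F)$, and iterate.

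For \eqref{eq:trace-twist} your justification differs slightly from the paper's.
\begin{itemize}
\item The paper argues abstractly. The twist is a field automorphism of $H_v$ stabilizing $\ov{K}(t)$, and field traces are equivariant under such automorphisms. It then says the identity ``hence'' holds on $\HH_v$ without further detail. That extension amounts to semilinearity plus the invariance $\Tr(T^j)^{(-1)}=\Tr(T^j)$, which is the $H_v$ case applied to $F=T^j$.
\item You prove that invariance directly. You note that $\Tr(T^j)$ is a power sum of the roots of $v(X)-t^{-1}$, so by Newton's identities it lies in $\FF_q[t^{-1}]$. This is exactly the content of \cref{lem:power-trace}, which the paper proves later for a different purpose.
\end{itemize}
Your version costs a small computation but makes the passage from $H_v$ to $\HH_v$ transparent. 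The paper's version needs no computation but relies on the general equivariance fact and leaves that passage implicit.

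One step you use without comment is that the twist on $\HH_v$ is multiplicative, so that $(T^kF)^{(-1)}=T^kF^{(-1)}$. This holds for the same reason: reducing $T^{j+k}$ to the basis $\{1,\ldots,T^{d-1}\}$ only introduces coefficients in $\FF_q[t^{-1}]$. You could also avoid the point entirely by writing $\Tr(T^kF)=\sum_j f_j\Tr(T^{j+k})$ directly.
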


\begin{proof}
For $0\leq j\leq d-1$, \eqref{eq:Mv-definition} gives
\[
\Tr\left(T^jt\delta_v(T)F\right)
=\Tr\left((t\delta_v(T)T^j)F\right)=
\sum_{i=0}^{d-1}
(M_v(t))_{ij}\Tr(T^iF).
\]
This is the $j$-th entry of
$M_v(t)^{\mathsf T}\bT(F)$, and iteration gives
\eqref{eq:trace-multiplication}.

For the second assertion, the Frobenius twist preserves both
$H_v$ and $\ov{K}(t)$. Since the field trace is compatible with
field automorphisms, we have
\[
\Tr\left(F^{(-1)}\right)=\Tr(F)^{(-1)}
\]
for every $F\in H_v$, and hence also for every $F\in\HH_v$.
\end{proof}

We next show that the trace coordinates of
$\mathfrak{L}_{\bs,\bu}^{[v]}(T,t)$ are $v$-adic entire.

\begin{lem}\label{lem:power-trace}
For $m\geq0$, put $R_m(t):=\Tr(T^m)$. Then
\[
R_m(t)\in\FF_q[t^{-1}],
\]
and the pole order of $R_m(t)$ at $t=0$ is at most
$\lfloor m/d\rfloor$. In particular,
\[
R_m(t)\in\FF_q
\qquad (0\leq m<d).
\]
\end{lem}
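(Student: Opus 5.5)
We need to show that $R_m(t)=\Tr(T^m)$, with the trace taken in $H_v/\ov{K}(t)$, lies in $\FF_q[t^{-1}]$ and has pole order at most $\lfloor m/d\rfloor$ at $t=0$. The plan is to compute the trace directly as the trace of the multiplication-by-$T^m$ matrix in the basis $\{1,T,\ldots,T^{d-1}\}$, reducing $T^m$ via the relation $v(T)=t^{-1}$.

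First, for $0\le m\le d-1$ and $0\le j\le d-1$, the product $T^mT^j=T^{m+j}$ has degree $m+j\le 2d-2$. We reduce it using \eqref{eq:T-reduction} in the form
\[
T^d=t^{-1}-(a_{d-1}T^{d-1}+\cdots+a_0).
\]
More generally, for every $k\ge0$ we show by induction on $k$ that
\[
T^k=\sum_{i=0}^{d-1}c_{k,i}T^i,\qquad c_{k,i}\in\FF_q[t^{-1}],\qquad \deg_{t^{-1}}c_{k,i}\le\lfloor k/d\rfloor.
\]
For $k<d$ this is trivial. For the inductive step, write $T^{k+1}=T\cdot T^k$. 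Multiplying each $T^i$ with $i\le d-2$ by $T$ gives $T^{i+1}$ and does not change the coefficient. The term $c_{k,d-1}T^d$ becomes $c_{k,d-1}\bigl(t^{-1}-\sum_i a_iT^i\bigr)$. When $k\equiv d-1\pmod d$, so that $\lfloor (k+1)/d\rfloor=\lfloor k/d\rfloor+1$, the bound follows. Otherwise we need the sharper fact that $\deg c_{k,d-1}\le\lfloor (k+1)/d\rfloor-1$ whenever $k+1\not\equiv0\pmod d$.

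To get this, we strengthen the induction. Writing $k=qd+r$ with $0\le r<d$, the claim is that $c_{k,i}$ has $t^{-1}$-degree at most $q$ for $i\le r$ and at most $q-1$ for $i>r$. This is the standard shape of division by a monic polynomial: $T^k=v(T)^{q}T^r+(\text{lower})$, where the leading part $t^{-q}T^r$ sits in position $r$. I expect checking this refined bound through the shift-and-reduce step to be the only fiddly point.

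An alternative, cleaner argument avoids the induction entirely. Write $T^k=\sum_{j} g_j(T)\,v(T)^j$, the $v$-adic expansion of $T^k$, with $\deg g_j<d$ and $g_j\in\FF_q[T]$. Since $\deg T^k=k$, we have $j\le\lfloor k/d\rfloor$. Substituting $v(T)=t^{-1}$ then gives coefficients in $\FF_q[t^{-1}]$ of degree at most $\lfloor k/d\rfloor$. I will use this version.

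Now $\Tr(T^m)=\sum_{j=0}^{d-1}c_{m+j,j}$. Each $c_{m+j,j}$ comes from the expansion of $T^{m+j}$, so a priori only the bound $\lfloor (m+j)/d\rfloor$ is available, which can exceed $\lfloor m/d\rfloor$. To avoid this, I compute the trace more intrinsically via Newton's identities, or equivalently as $\sum_\beta\beta^m$ over the roots $\beta$ of $v(X)-t^{-1}$.

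Over $\ov{\ov K(t)}$, the roots $\beta$ satisfy $v(\beta)=t^{-1}$. Set $s=t^{-1}$. The power sums $p_m=\sum\beta^m$ of the roots of $X^d+a_{d-1}X^{d-1}+\cdots+(a_0-s)$ are polynomials with $\FF_q$-coefficients in the elementary symmetric functions. These are $\pm a_{d-1},\ldots,\pm a_1$ and $\pm(a_0-s)$, so $p_m\in\FF_q[s]$. For the degree bound, assign weight $i$ to $e_i$. Then $p_m$ is isobaric of weight $m$, so each monomial involves $e_d$ at most $\lfloor m/d\rfloor$ times, and only $e_d$ involves $s$. Hence $\deg_s p_m\le\lfloor m/d\rfloor$. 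For $m<d$ the weight is below $d$, so $e_d$ does not occur and $R_m\in\FF_q$.

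The main point needing care is the isobaric-weight argument, which follows from Newton's identities $p_m=\sum_{i=1}^{m-1}(-1)^{i-1}e_ip_{m-i}+(-1)^{m-1}me_m$ (with $e_i=0$ for $i>d$) by induction on $m$.
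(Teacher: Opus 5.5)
Your final argument is correct and is essentially the paper's proof. You take power sums of the roots of $v(X)-t^{-1}$, apply Newton's identities, and use the isobaric-weight count to show that $e_d$, the only coefficient involving $t^{-1}$, occurs at most $\lfloor m/d\rfloor$ times. The paper runs the same Newton recursion with step-by-step bookkeeping instead of weights: it uses Newton's identities directly for $m<d$, and for $m\geq d$ it takes traces of $T^m=-a_{d-1}T^{m-1}-\cdots-a_0T^{m-d}+t^{-1}T^{m-d}$ and inducts on the pole order.
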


\begin{proof}
For $1\leq m<d$, Newton's identities applied to
\[
v(X)-t^{-1}
=
X^d+a_{d-1}X^{d-1}+\cdots+a_1X+(a_0-t^{-1})
\]
give
\[
R_m
+a_{d-1}R_{m-1}
+\cdots
+a_{d-m+1}R_1
+m a_{d-m}
=
0.
\]
Since $d-m\geq1$, the constant coefficient $a_0-t^{-1}$ does not
occur. Hence $R_m\in\FF_q$. Moreover,
$R_0=\Tr(1)=d\in\FF_q$. For $m\geq d$, the relation
$v(T)=t^{-1}$ gives
\[
T^m
=
-a_{d-1}T^{m-1}-\cdots-a_0T^{m-d}
+t^{-1}T^{m-d}.
\]
Taking traces yields
\[
R_m
=
-a_{d-1}R_{m-1}-\cdots-a_0R_{m-d}
+t^{-1}R_{m-d}.
\]
The assertion follows by induction on $m$.
\end{proof}

\begin{prop}\label{prop:trace-entire}
Let $\bs=(s_1,\ldots,s_r)\in\Ical$ be non-empty and
$\bu=(u_1,\ldots,u_r)\in
(\ov{K}^{\times})^r\cap\DD_{\bs,v}^{\Conv}$.
For every $0\leq m\leq d-1$, write
\[
T^m\mathfrak{L}_{\bs,\bu}^{[v]}(T,t)
=
\sum_{j=0}^{d-1}f_{m,j}(t)T^j.
\]
Then
\[
f_{m,j}(t)\in\Ecal_v
\qquad
(0\leq j\leq d-1),\qquad \text{and}\qquad
\Tr\left(
T^m\mathfrak{L}_{\bs,\bu}^{[v]}(T,t)
\right)
\in\Ecal_v.
\]
Consequently,
\[
\bT\left(
\mathfrak{L}_{\bs,\bu}^{[v]}(T,t)
\right)
\in\Mat_{d\times1}(\Ecal_v).
\]
\end{prop}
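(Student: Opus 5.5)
The plan is to treat each summand of \eqref{eq:L-series} separately. We expand it in the basis $\{1,T,\ldots,T^{d-1}\}$ and show that the resulting coefficient series converge for the Gauss norm $\|\cdot\|_R$ on every closed disc $|t|_v\le R$. Since the trace is $\CC_v(\!(t)\!)$-linear, \cref{lem:power-trace} then handles the trace assertion. Indeed,
\[
\Tr\left(T^m\mathfrak{L}_{\bs,\bu}^{[v]}\right)=\sum_j f_{m,j}(t)R_j(t)
\]
with $0\le j\le d-1$, so $R_j\in\FF_q$. Moreover $f_{m,j}$ is obtained from the coefficients of $\mathfrak{L}_{\bs,\bu}^{[v]}$ by multiplying by $T^m$ and reducing via \eqref{eq:T-reduction}. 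This introduces at most finitely many factors $t^{-1}$, and those are compensated because each summand carries $t^n$ with $n=\sum i_\ell s_\ell$, exactly as in \cref{lem:L-in-Kv}. Hence it suffices to treat $\mathfrak{L}_{\bs,\bu}^{[v]}$ itself, together with its products by $T^m$ for $m\le d-1$. The final claim about $\bT$ is then immediate.

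\emph{Step 1 (coefficient field).} By the order estimate in the proof of \cref{lem:L-in-Kv}, each fixed power $t^k$ receives contributions from only finitely many tuples $i_1>\cdots>i_r$, namely those with $\lceil n/d\rceil\le k$. So each coefficient of $f_{m,j}$ is a finite polynomial expression in the following quantities:
\begin{itemize}
\item the $u_\ell^{q^{i}}$;
\item elements of $K$, coming from the coefficients of $\Dcal_{v,i}$, which are polynomials in the $\theta^{q^j}$ over $\FF_q$;
\item coefficients of the twists $\Omega_v^{(i)}$.
\end{itemize}
Since $\Omega_v\in\Ecal_v$ by \eqref{eq:v-adic AT}, and $\Omega_v^{(i)}=\Omega_v/\prod_{j\le i}(1-v^{q^j}t)$, all these coefficients lie in the finite extension $K_v(u_1,\ldots,u_r,\text{coeffs of }\Omega_v)$ of $K_v$. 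I take the remaining algebraicity and field condition in the definition of $\Ecal_v$ to be inherited from $\Omega_v\in\Ecal_v$, as in \cite{ChenABP}.

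\emph{Step 2 (uniform Gauss-norm bounds, the main step).} Fix $R\ge1$. We bound each ingredient in turn.
\begin{itemize}
\item Since $|v|_v<1$, the product $\prod_{j>i}(1-v^{q^j}t)$ has Gauss norm at most $C_R:=\prod_{j\ge1}\max(1,|v|_v^{q^j}R)<\infty$. Hence $\|(\Omega_v^{s_\ell})^{(i)}\|_R\le C_R^{s_\ell}$ uniformly in $i$.
\item Because $|\theta|_v\le1$, the polynomials $\Delta_v(T,\theta^{q^j})$ have $T$-coefficients of absolute value $\le1$. Reducing $T$-powers via \eqref{eq:T-reduction}, whose coefficients lie in $\FF_q$, produces only factors $t^{-e}$ with $e\le n$. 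So each basis coefficient of $\prod_\ell\Dcal_{v,i_\ell}(T)^{s_\ell}t^n$ is a polynomial in $t$ with coefficients of norm $\le1$ and degree $\le n$. Its $R$-norm is therefore at most $R^{n}\le R^{w i_1}$.
\item For the $u$-factors, $|u_1|_v<1$ and $|u_\ell|_v\le1$ for $\ell\ge2$, so their product has norm at most $|u_1|_v^{q^{i_1}}$.
\end{itemize}
Combining these, each summand has basis coefficients of $R$-norm at most
\[
C_R^{w}R^{w i_1}|u_1|_v^{q^{i_1}},
\]
and the number of tuples with a given $i_1$ is at most $(i_1+1)^{r}$. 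Since $q^{i_1}$ grows exponentially while $R^{wi_1}(i_1+1)^r$ grows only exponentially in $i_1$ with a fixed base, the double-exponential decay of $|u_1|_v^{q^{i_1}}$ wins. Thus the series converges in $\|\cdot\|_R$ for every $R$. The limit is then an entire power series, i.e.\ $|a_k|_v^{1/k}\to0$, and it agrees with the $t$-adic limit from \cref{lem:L-in-Kv}. Multiplying by $T^m$ and reducing only changes the bound by a factor $R^{O(1)}$, so the same holds for the $f_{m,j}$.

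The main obstacle is Step 2. Concretely, one must make sure that reducing the large $T$-degree polynomials $\Dcal_{v,i}$ modulo $v(T)=t^{-1}$ never produces negative powers of $t$ beyond what $t^n$ absorbs. This is where the degree count $\deg_T\Dcal_{v,i}=i(d-1)$ from \cref{lem:L-in-Kv} is used. One must also check that the reduction does not inflate the coefficient norms, which holds because everything stays over $\FF_q[\theta]$ with $|\theta|_v\le1$.
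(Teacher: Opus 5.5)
Your proposal is correct and follows essentially the same route as the paper's proof. Both arguments use the per-summand $t$-adic order bound coming from $\deg_T\Dcal_{v,i}=i(d-1)$, a bound on $\Omega_v^{(i)}$ over $|t|_v\le R$ that is uniform in $i$, and the fact that reduction modulo $v(T)=t^{-1}$ keeps basis coefficients in $A[t^{-1}]$, so that $t^n$ times each of them is a polynomial of degree at most $n$ bounded by $\max\{1,R\}^{wi_1}$. They then conclude with the decay of $|u_1|_v^{q^{i_1}}$ after grouping by $i_1$ and with $R_j\in\FF_q$ for the trace; the only differences are cosmetic (Gauss norms instead of sup norms on disks, and a tuple count that the ultrametric inequality makes unnecessary).
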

\begin{proof}
Fix $0\leq m\leq d-1$. Consider a summand indexed by
$i_1>\cdots>i_r\geq0$, and put
$n:=i_1s_1+\cdots+i_rs_r$.
By the proof of \cref{lem:L-in-Kv} and Lemma \ref{lem:power-trace}, the pole order at $t=0$ of each
basis coefficient of
\[
T^m\prod_{\ell=1}^r
\Dcal_{v,i_\ell}(T)^{s_\ell}
\]
is at most
\[
\left\lfloor
\frac{m+(d-1)n}{d}
\right\rfloor.
\]
Also, since
\[
\prod_{\ell=1}^r
(\Omega_v^{s_\ell}u_\ell)^{(i_\ell)}
\in\CC_v[\![t]\!],
\]
the corresponding summand of each $f_{m,j}(t)$ has $t$-adic order
at least
\[
n-
\left\lfloor
\frac{m+(d-1)n}{d}
\right\rfloor
\geq0.
\]
This lower bound tends to infinity as $n\to\infty$.
In particular, no negative powers of $t$ occur.

We next prove that each $f_{m,j}(t)$ is entire. Fix $R>0$.
By the definition of $\Omega_v$, for every $i\geq0$ we have
\[
\Omega_v^{(i)}(t)
=
\prod_{k>i}(1-v^{q^k}t).
\]
Hence, for $|t|_v\leq R$,
\[
|\Omega_v^{(i)}(t)|_v
\leq
\prod_{k\geq1}
\max\{1,|v|_v^{q^k}R\},
\]
uniformly in $i$. The product on the right is finite, since
$|v|_v^{q^k}R<1$ for all sufficiently large $k$.

On the other hand, since
\[
\Delta_v(T,\theta^{q^k})\in A[T],
\]
all coefficients of
\[
\prod_{\ell=1}^r
\Dcal_{v,i_\ell}(T)^{s_\ell}
\]
have $v$-adic absolute value at most $1$. By the proof of
\cref{lem:L-in-Kv}, each basis coefficient of
\[
T^m\prod_{\ell=1}^r
\Dcal_{v,i_\ell}(T)^{s_\ell}
\]
belongs to $A[t^{-1}]$ and has all coefficients of $v$-adic absolute
value at most $1$. Hence, after multiplication by $t^n$, each such
basis coefficient is a polynomial in $t$ of degree at most $n$.
Consequently, on $|t|_v\leq R$, its absolute value is at most
$\max\{1,R\}^n$.

Now, since $\bu\in\DD_{\bs,v}^{\Conv}$, we have
\[
|u_1|_v<1,
\qquad
|u_\ell|_v\leq1
\quad(2\leq\ell\leq r).
\]
Thus, on $|t|_v\leq R$, the absolute value of the corresponding
summand of each $f_{m,j}(t)$ is at most
\[
\left(
\prod_{k\geq1}
\max\{1,|v|_v^{q^k}R\}
\right)^w
|u_1|_v^{q^{i_1}}
\max\{1,R\}^n.
\]
Since $n\leq wi_1$, this is bounded above by
\[
\left(
\prod_{k\geq1}
\max\{1,|v|_v^{q^k}R\}
\right)^w
|u_1|_v^{q^{i_1}}
\max\{1,R\}^{wi_1},
\]
which tends to $0$ as $i_1\to\infty$. For each fixed $i_1$, there
are only finitely many tuples
$i_1>\cdots>i_r\geq0$. Hence, grouping the summands according to
$i_1$, the defining series of each $f_{m,j}(t)$ converges uniformly
on every closed disk $|t|_v\leq R$. Since $R>0$ was arbitrary,
each $f_{m,j}(t)$ defines an entire function on $\CC_v$.

It remains to verify the algebraicity for the coefficients.
By \eqref{eq:v-adic AT}, $\Omega_v\in\Ecal_v$ and its coefficients lie in a finite
extension of $K_v$. Note that for every $i\geq0$, the coefficients of
$\Omega_v^{(i)}$ lie in this extension. Enlarging this extension if
necessary, we may also assume that it contains
$u_1,\ldots,u_r$. Furthermore, the coefficients of
$\Dcal_{v,i}(T)$ belong to $K$. Hence the coefficients of every
summand of each $f_{m,j}(t)$ lie in the same finite extension of
$K_v$. Finally, the lower bound
\[
n-
\left\lfloor
\frac{m+(d-1)n}{d}
\right\rfloor
\longrightarrow\infty
\qquad(n\to\infty)
\]
shows that, for each fixed $N\geq0$, only finitely many tuples
$i_1>\cdots>i_r\geq0$ can contribute to the coefficient of $t^N$.
Therefore,
\[
f_{m,j}(t)\in\Ecal_v
\qquad
(0\leq j\leq d-1).
\]

Finally, since
\[
T^m\mathfrak{L}_{\bs,\bu}^{[v]}(T,t)
=
\sum_{j=0}^{d-1}f_{m,j}(t)T^j,
\]
we have
\[
\Tr\left(
T^m\mathfrak{L}_{\bs,\bu}^{[v]}(T,t)
\right)
=
\sum_{j=0}^{d-1}
f_{m,j}(t)R_j(t).
\]
By \cref{lem:power-trace},
$R_j(t)\in\FF_q$ for all $0\leq j\leq d-1$.
Hence, we obtain
\[
\Tr\left(
T^m\mathfrak{L}_{\bs,\bu}^{[v]}(T,t)
\right)
\in\Ecal_v.
\]
The final assertion follows from the definition of $\bT$.
\end{proof}

\subsection{Frobenius difference equations}
\label{subsec:Frobenius-equations}
Let $\bs=(s_1,\ldots,s_r)\in\Ical$ be non-empty and $\bu=(u_1,\ldots,u_r)\in(\ov{K}^{\times})^r\cap\DD_{\bs,v}^{\Conv}$. We put
$w=\wt(\bs)$. For $1\leq\ell\leq r$, set
\[
\bs_\ell:=(s_1,\ldots,s_\ell),
\qquad
\bu_\ell:=(u_1,\ldots,u_\ell),
\qquad
w_\ell:=s_1+\cdots+s_\ell.
\]
Also put $w_0:=0$ and
\[
\boldsymbol{\tau}_v
:=
\bT(1)
=
\begin{pmatrix}
\Tr(1)\\
\Tr(T)\\
\vdots\\
\Tr(T^{d-1})
\end{pmatrix}
\in\Mat_{d\times1}(\FF_q).
\]
Note that the entries of $\boldsymbol{\tau}_v$ lie in $\FF_q$ by
\cref{lem:power-trace}.

Define
\begin{equation}\label{eq:psi-su}
\psi_{\bs,\bu}(t)
:=
\begin{pmatrix}
\Omega_v^w\\[2mm]
\bT\left(
\Omega_v^{w-w_1}
\mathfrak{L}_{\bs_1,\bu_1}^{[v]}(T,t)
\right)\\
\vdots\\
\bT\left(
\Omega_v^{w-w_r}
\mathfrak{L}_{\bs_r,\bu_r}^{[v]}(T,t)
\right)
\end{pmatrix}=\begin{pmatrix}
\Omega_v^w\\[2mm]
\Omega_v^{w-w_1}\bT(\mathfrak{L}_{\bs_1,\bu_1}^{[v]}(T,t))\\
\vdots\\
\Omega_v^{w-w_r}\bT(
\mathfrak{L}_{\bs_r,\bu_r}^{[v]}(T,t))
\end{pmatrix}.
\end{equation}
Note that $w=w_r$, so the last $d$-entry of \eqref{eq:psi-su} is $\bT(\mathfrak{L}_{\bs_r,\bu_r}^{[v]}(T,t))$.
Also, by \eqref{eq:v-adic AT} and \cref{prop:trace-entire}, we have
\[
\psi_{\bs,\bu}(t)
\in
\Mat_{(1+rd)\times1}(\Ecal_v).
\]

\begin{prop}\label{prop:Frobenius-system}
Let the notation be as above. We have
\begin{equation}\label{eq:Frobenius-system}
\psi_{\bs,\bu}^{(-1)}
=
\Phi_{\bs,\bu}\psi_{\bs,\bu},
\end{equation}
where
$\Phi_{\bs,\bu}\in\Mat_{1+rd}(\ov{K}[t])$ is the block lower
triangular matrix
\[
\Phi_{\bs,\bu}(t)
=
\resizebox{0.8\linewidth}{!}{$
\left(
\begin{array}{c|c|c|c|c}
(1-vt)^w
&
0
&
0
&
\cdots
&
0
\\ \hline
u_1^{q^{-1}}(1-vt)^w\boldsymbol{\tau}_v
&
\begin{gathered}
(1-vt)^{w-w_1}\\
(M_v(t)^{\mathsf T})^{w_1}
\end{gathered}
&
0
&
\cdots
&
0
\\ \hline
0
&
\begin{gathered}
u_2^{q^{-1}}(1-vt)^{w-w_1}\\
(M_v(t)^{\mathsf T})^{w_1}
\end{gathered}
&
\begin{gathered}
(1-vt)^{w-w_2}\\
(M_v(t)^{\mathsf T})^{w_2}
\end{gathered}
&
\ddots
&
\vdots
\\ \hline
\vdots
&
\ddots
&
\ddots
&
\ddots
&
0
\\ \hline
0
&
\cdots
&
0
&
\begin{gathered}
u_r^{q^{-1}}(1-vt)^{w-w_{r-1}}\\
(M_v(t)^{\mathsf T})^{w_{r-1}}
\end{gathered}
&
(M_v(t)^{\mathsf T})^w
\end{array}
\right)
$}.
\]
Here, the first diagonal block is of size $1\times1$, the first
subdiagonal block is of size $d\times1$, and all the remaining
displayed blocks are of size $d\times d$.
\end{prop}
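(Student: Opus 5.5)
The plan is to verify \eqref{eq:Frobenius-system} block by block, using only \eqref{eq:v-adic AT}, \cref{prop:L-Frobenius} and \cref{lem:trace-coordinates}. I will also record why $\Phi_{\bs,\bu}$ has entries in $\ov{K}[t]$. This follows from \cref{lem:Mv}, which gives $M_v(t)\in\Mat_d(K[t])$, together with the facts that $\boldsymbol{\tau}_v$ has entries in $\FF_q$ and that $u_\ell^{q^{-1}}\in\ov{K}$.

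\textbf{The top entry.} By \eqref{eq:v-adic AT} with $\alpha=v$ we have $\Omega_v^{(-1)}=(1-vt)\Omega_v$. Hence
\[
(\Omega_v^w)^{(-1)}=(1-vt)^w\Omega_v^w,
\]
which is the first row of $\Phi_{\bs,\bu}$.

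\textbf{The $\ell$-th block, $1\leq\ell\leq r$.} I first work inside $\HH_v$ with
\[
G_\ell:=\Omega_v^{w-w_\ell}\mathfrak{L}_{\bs_\ell,\bu_\ell}^{[v]}(T,t).
\]
Since twisting is a ring automorphism of $\HH_v$, we get $G_\ell^{(-1)}=(1-vt)^{w-w_\ell}\Omega_v^{w-w_\ell}\bigl(\mathfrak{L}_{\bs_\ell,\bu_\ell}^{[v]}\bigr)^{(-1)}$. Now apply \cref{prop:L-Frobenius} to the index $\bs_\ell$, whose weight is $w_\ell$ and whose truncation is $\bs_{\ell-1}$. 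This gives
\[
G_\ell^{(-1)}=u_\ell^{q^{-1}}(1-vt)^{w-w_{\ell-1}}\Omega_v^{w-w_{\ell-1}}(t\delta_v(T))^{w_{\ell-1}}\mathfrak{L}_{\bs_{\ell-1},\bu_{\ell-1}}^{[v]}+(1-vt)^{w-w_\ell}(t\delta_v(T))^{w_\ell}G_\ell .
\]
Here I combined $(1-vt)^{w-w_\ell}(1-vt)^{s_\ell}=(1-vt)^{w-w_{\ell-1}}$ and $\Omega_v^{w-w_\ell}\Omega_v^{s_\ell}=\Omega_v^{w-w_{\ell-1}}$. The first summand is therefore $u_\ell^{q^{-1}}(1-vt)^{w-w_{\ell-1}}(t\delta_v(T))^{w_{\ell-1}}G_{\ell-1}$, with the convention $G_0=\Omega_v^w\cdot 1$.

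\textbf{Passing to trace coordinates.} Apply $\bT$ to this identity. The factors $(1-vt)^{\cdot}$ and $\Omega_v^{\cdot}$ lie in $\CC_v(\!(t)\!)$, so $\bT$ is linear over them. By \eqref{eq:trace-twist} the left side becomes $\bT(G_\ell)^{(-1)}$. By \eqref{eq:trace-multiplication} each power $(t\delta_v(T))^m$ becomes $(M_v(t)^{\mathsf T})^m$. This produces
\begin{itemize}
\item the diagonal block $(1-vt)^{w-w_\ell}(M_v^{\mathsf T})^{w_\ell}$, and
\item the subdiagonal block $u_\ell^{q^{-1}}(1-vt)^{w-w_{\ell-1}}(M_v^{\mathsf T})^{w_{\ell-1}}$ acting on $\bT(G_{\ell-1})$.
\end{itemize}
For $\ell=r$ the exponent $w-w_r$ vanishes, which matches the last diagonal block.

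\textbf{The case $\ell=1$.} This is the only point needing care, since the preceding entry is the scalar $\Omega_v^w$ rather than a trace vector. Here $w_0=0$ and $\mathfrak{L}_{\varnothing,\varnothing}^{[v]}=1$, so \cref{prop:L-Frobenius} must be read with this convention. One checks that the $i_r=0$ term of its proof indeed gives $u_1^{q^{-1}}(1-vt)^{s_1}\Omega_v^{s_1}$. The subdiagonal term is then $\bT\bigl(u_1^{q^{-1}}(1-vt)^w\Omega_v^w\cdot 1\bigr)=u_1^{q^{-1}}(1-vt)^w\Omega_v^w\boldsymbol{\tau}_v$. This is exactly the $d\times1$ block $u_1^{q^{-1}}(1-vt)^w\boldsymbol{\tau}_v$ applied to the first entry $\Omega_v^w$.

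Stacking the top entry and the $r$ block identities yields \eqref{eq:Frobenius-system}. I expect no real obstacle beyond this exponent bookkeeping and the $\ell=1$ convention.
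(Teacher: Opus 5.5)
Your proposal is correct and follows the paper's proof essentially step for step. You twist the top entry using $\Omega_v^{(-1)}=(1-vt)\Omega_v$, apply \cref{prop:L-Frobenius} to each truncation $\bs_\ell$ after multiplying by $\Omega_v^{w-w_\ell}$, and pass to trace coordinates via \cref{lem:trace-coordinates}, handling $\ell=1$ through $\bT(\Omega_v^w)=\boldsymbol{\tau}_v\Omega_v^w$; your extra check that $\Phi_{\bs,\bu}$ has entries in $\ov{K}[t]$, using \cref{lem:Mv}, is a harmless supplement the paper leaves implicit.
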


\begin{proof}
By \eqref{eq:v-adic AT}, the first coordinate satisfies
\[
(\Omega_v^w)^{(-1)}
=
(1-vt)^w\Omega_v^w.
\]

For $1\leq\ell\leq r$, applying
\cref{prop:L-Frobenius} to
$\mathfrak{L}_{\bs_\ell,\bu_\ell}^{[v]}(T,t)$ gives
\[
\begin{aligned}
&
\left(
\Omega_v^{w-w_\ell}
\mathfrak{L}_{\bs_\ell,\bu_\ell}^{[v]}(T,t)
\right)^{(-1)}
\\
&\quad=
u_\ell^{q^{-1}}
(1-vt)^{w-w_{\ell-1}}
\left(t\delta_v(T)\right)^{w_{\ell-1}}
\Omega_v^{w-w_{\ell-1}}
\mathfrak{L}_{\bs_{\ell-1},\bu_{\ell-1}}^{[v]}(T,t)
\\
&\qquad\quad+
(1-vt)^{w-w_\ell}
\left(t\delta_v(T)\right)^{w_\ell}
\Omega_v^{w-w_\ell}
\mathfrak{L}_{\bs_\ell,\bu_\ell}^{[v]}(T,t),
\end{aligned}
\]
where, when $\ell=1$, the first term on the right-hand side is
understood as
\[
u_1^{q^{-1}}(1-vt)^w\Omega_v^w.
\]

Applying $\bT$ and using \cref{lem:trace-coordinates}, we obtain, for
$2\leq\ell\leq r$,
\[
\begin{aligned}
&
\bT\left(
\Omega_v^{w-w_\ell}
\mathfrak{L}_{\bs_\ell,\bu_\ell}^{[v]}(T,t)
\right)^{(-1)}
\\
&\quad=
u_\ell^{q^{-1}}
(1-vt)^{w-w_{\ell-1}}
\left(M_v(t)^{\mathsf T}\right)^{w_{\ell-1}}
\bT\left(
\Omega_v^{w-w_{\ell-1}}
\mathfrak{L}_{\bs_{\ell-1},\bu_{\ell-1}}^{[v]}(T,t)
\right)
\\
&\qquad\quad+
(1-vt)^{w-w_\ell}
\left(M_v(t)^{\mathsf T}\right)^{w_\ell}
\bT\left(
\Omega_v^{w-w_\ell}
\mathfrak{L}_{\bs_\ell,\bu_\ell}^{[v]}(T,t)
\right).
\end{aligned}
\]
For $\ell=1$, since
$\bT(\Omega_v^w)=\boldsymbol{\tau}_v\Omega_v^w$, we obtain
\[
\begin{aligned}
&
\bT\left(
\Omega_v^{w-w_1}
\mathfrak{L}_{\bs_1,\bu_1}^{[v]}(T,t)
\right)^{(-1)}
\\
&\quad=
u_1^{q^{-1}}(1-vt)^w
\boldsymbol{\tau}_v\Omega_v^w
+
(1-vt)^{w-w_1}
\left(M_v(t)^{\mathsf T}\right)^{w_1}
\bT\left(
\Omega_v^{w-w_1}
\mathfrak{L}_{\bs_1,\bu_1}^{[v]}(T,t)
\right).
\end{aligned}
\]
These identities give precisely \eqref{eq:Frobenius-system}.
\end{proof}

\begin{cor}\label{cor:ABP-admissible}
Let the notation be as above. The pair
$\left(\Phi_{\bs,\bu},\psi_{\bs,\bu}\right)$ satisfies the hypotheses
of the $v$-adic ABP criterion in \cref{thm:vadic-ABP} at
$\gamma=v^{-1}$.
\end{cor}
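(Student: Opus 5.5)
Checking \cref{cor:ABP-admissible} means verifying the four hypotheses of \cref{thm:vadic-ABP} for $\Phi=\Phi_{\bs,\bu}$, $\psi=\psi_{\bs,\bu}$, $\ell=1+rd$ and $\gamma=v^{-1}$. Each is either already established or a short computation with the determinant formula of \cref{lem:Mv}.

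\emph{The matrix $\Phi_{\bs,\bu}$.} Its entries lie in $\ov{K}[t]$, since $M_v(t)\in\Mat_d(K[t])$ by \cref{lem:Mv}, the vector $\boldsymbol{\tau}_v$ has entries in $\FF_q$, and $u_\ell^{q^{-1}}\in\ov{K}$ because $\ov{K}$ is perfect.

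\emph{The vector $\psi_{\bs,\bu}$.} It lies in $\Mat_{(1+rd)\times1}(\Ecal_v)$ for the following reason. By \eqref{eq:v-adic AT}, $\Omega_v\in\Ecal_v$. By \cref{prop:trace-entire}, each $\bT(\mathfrak{L}^{[v]}_{\bs_\ell,\bu_\ell})$ has entries in $\Ecal_v$. Finally, $\Ecal_v$ is a ring: the entire series with coefficients in a fixed finite extension of $K_v$ are closed under products, after passing to a common compositum, which is still finite over $K_v$. The functional equation $\psi^{(-1)}=\Phi\psi$ is exactly \cref{prop:Frobenius-system}.

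\emph{The point $\gamma$.} We have $\gamma=v^{-1}\in K^\times$. It is not in $\ov{\FF}_q^\times$, since $v$ has positive degree in $\theta$ and so is transcendental over $\FF_q$.

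\emph{The determinant condition.} This is the only point needing a computation. $\Phi_{\bs,\bu}$ is block lower triangular, so its determinant is the product of the diagonal block determinants:
\[
\det\Phi_{\bs,\bu}(t)=(1-vt)^w\prod_{\ell=1}^{r}(1-vt)^{d(w-w_\ell)}\bigl(\det M_v(t)\bigr)^{w_\ell}.
\]
By \eqref{eq:det-Mv} this is, up to sign, a product of a power of $t$ and a power of $(1-vt)$. Its only zeros are therefore $t=0$ and $t=v^{-1}$.

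Now evaluate at $\gamma^{(-i)}=v^{-q^{-i}}$ for $i\in\NN$. This is nonzero. It also differs from $v^{-1}$: otherwise $v^{q^{-i}}=v$, so $v=v^{q^i}$, forcing $v$ to be algebraic over $\FF_q$, which is false. Equivalently, $1-v\cdot v^{-q^{-i}}=1-v^{1-q^{-i}}\neq0$, because $v^{1-q^{-i}}$ is not a root of unity, as $v$ is not algebraic over $\FF_q$. Hence $\det\Phi_{\bs,\bu}(\gamma^{(-i)})\neq0$ for every $i\geq1$.

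I expect no real obstacle here. The only care needed is in the determinant evaluation: the evaluation points must be $\gamma^{(-i)}$ with $i\geq1$, excluding $i=0$. This matters because $\det\Phi_{\bs,\bu}$ does vanish at $t=v^{-1}$ itself.
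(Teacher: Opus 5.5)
Your proposal is correct and follows the paper's own argument. The membership and functional-equation hypotheses come from \cref{lem:Mv}, \cref{prop:trace-entire} and \cref{prop:Frobenius-system}, and the determinant condition follows from the block lower triangular structure together with \eqref{eq:det-Mv}, whose zeros lie in $\{0,v^{-1}\}$ and therefore avoid $v^{-q^{-i}}$ for $i\geq1$. Your explicit determinant formula and your check that $\Ecal_v$ is closed under products just spell out details the paper leaves implicit.
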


\begin{proof}
It is clear that $v^{-1}\in\ov{K}^\times\setminus\ov{\FF}_q^\times$. By construction, notice that
$\psi_{\bs,\bu}\in\Mat_{(1+rd)\times1}(\Ecal_v)$ and
$\Phi_{\bs,\bu}\in\Mat_{1+rd}(\ov{K}[t])$.
Since $\Phi_{\bs,\bu}$ is block lower triangular, its determinant is
the product of the determinants of its diagonal blocks. By
\cref{lem:Mv},
\[
\det M_v(t)
=
(-1)^{d+1}t(1-vt)^{d-1},
\]
so every zero of $\det\Phi_{\bs,\bu}(t)$ is contained in
$\{0,v^{-1}\}$. For every $i\geq1$, the element
$(v^{-1})^{(-i)}$ is neither $0$ nor $v^{-1}$. Therefore, we have
\[
\det\Phi_{\bs,\bu}
\left((v^{-1})^{(-i)}\right)
\neq0
\qquad
(i\geq1),
\]
and the assertion follows from \cref{thm:vadic-ABP}.
\end{proof}

\subsection{Specialization}
\label{subsec:frobenius-fibers}
In this subsection, we study the specialization of
$\mathfrak L_{\bs,\bu}^{[v]}(T,t)$ and its trace coordinates at
$t=v^{-q^N}$ for $N\geq1$.

We first need the following lemmas.
\begin{lem}\label{lem:trace-specialization}
Let $a\in\ov{K}^\times$ and suppose that $v(T)-a^{-1}$ is separable.
Let
\[
F(T,t)=\sum_{j=0}^{d-1}f_j(t)T^j\in\HH_v
\]
be such that each $f_j(t)\in\CC_v(\!(t)\!)$ is defined at $t=a$. Then
\begin{equation}\label{eq:trace-specialization}
\Tr(F(T,t))(a)
=
\sum_{v(\beta)=a^{-1}}F(\beta,a),
\end{equation}
where the sum is taken over all roots of $v(T)-a^{-1}$.
\end{lem}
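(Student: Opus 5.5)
By linearity of $\Tr$ and of specialization, the identity \eqref{eq:trace-specialization} reduces to the basis elements. Since $\Tr$ is $\CC_v(\!(t)\!)$-linear, we have
\[
\Tr(F(T,t))=\sum_{j=0}^{d-1}f_j(t)R_j(t),
\qquad R_j(t)=\Tr(T^j).
\]
By \cref{lem:power-trace}, $R_j(t)\in\FF_q$ for $0\leq j\leq d-1$; in particular each $R_j$ is defined at $t=a$. So the left-hand side of \eqref{eq:trace-specialization} is $\sum_j f_j(a)R_j(a)$, and it is well defined because each $f_j$ is defined at $a$ by hypothesis. The right-hand side is
\[
\sum_{\beta}\sum_j f_j(a)\beta^j=\sum_j f_j(a)\Bigl(\sum_\beta \beta^j\Bigr).
\]
It therefore suffices to prove that
\[
R_j(a)=\sum_{v(\beta)=a^{-1}}\beta^j\qquad\text{for } 0\leq j\leq d-1.
\]

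For this we use that $\Tr_{H_v/\ov{K}(t)}(T^j)$ is the $j$-th power sum of the roots of the minimal polynomial $v(X)-t^{-1}$ of $T$ over $\ov{K}(t)$. Newton's identities express this power sum polynomially in the coefficients of that polynomial. Specializing $t\mapsto a$ is a ring homomorphism on the subring of $\ov{K}(t)$ regular at $a$, and it sends the coefficients of $v(X)-t^{-1}$ to those of $v(X)-a^{-1}$. Hence $R_j(a)$ equals the same Newton expression in the coefficients of $v(X)-a^{-1}$, which is $\sum_\beta\beta^j$ with roots counted with multiplicity. The separability hypothesis ensures the $d$ roots are distinct, so "the sum over all roots of $v(T)-a^{-1}$" agrees with the multiplicity-weighted sum. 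More simply, the proof of \cref{lem:power-trace} already shows $R_j\in\FF_q$ is independent of $t$ for $j<d$, and that the Newton recursion for $j<d$ never involves the constant coefficient. So both sides are the same element of $\FF_q$, computed by the same recursion from $a_{d-1},\ldots,a_1$; for $j=0$ both equal $d$.

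The only subtle point is bookkeeping: we must make sense of evaluating an element of $\HH_v$ at $(\beta,a)$, namely $F(\beta,a):=\sum_j f_j(a)\beta^j$ using the fixed basis representation, and we must check that the roots $\beta$ lie in $\ov{K}\subset\CC_v$. Neither step is a real obstacle. If $a$ lies outside the disc of convergence, "defined at $t=a$" should be read as the $f_j$ being entire (as in the application, via \cref{prop:trace-entire}), and the argument is unchanged.
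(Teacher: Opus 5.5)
Your proof is correct and follows the paper's structure: both use $\CC_v(\!(t)\!)$-linearity of $\Tr$ to reduce to the identity $R_j(a)=\sum_{v(\beta)=a^{-1}}\beta^j$ for $0\leq j\leq d-1$. The only difference is how that identity is checked. The paper specializes the matrix $L(t)$ of multiplication by $T$ to $L(a)$ and reads off its eigenvalues, whereas you use Newton's identities; in particular you observe that for $j<d$ the recursion never involves the constant coefficient, so both sides are the same element of $\FF_q$. Either justification is valid.
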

\begin{proof}
Let $L(t)$ denote the matrix representing multiplication by $T$
with respect to the basis $\{1,T,\ldots,T^{d-1}\}$. Then
\[
R_j(t)
=
\Tr(T^j)
=
\operatorname{tr}\left(L(t)^j\right).
\]
After specializing $t=a$, the matrix $L(a)$ represents
multiplication by $T$ on
\[
\ov{K}[T]/(v(T)-a^{-1}).
\]
Since $v(T)-a^{-1}$ is separable, $L(a)$ is diagonalizable over
$\ov{K}$, with eigenvalues given by the roots $\beta$ of
$v(T)-a^{-1}$. Hence,
\[
R_j(a)
=
\operatorname{tr}\left(L(a)^j\right)
=
\sum_{v(\beta)=a^{-1}}\beta^j.
\]
Therefore,
\[
\begin{aligned}
\Tr(F(T,t))(a)
=
\sum_{j=0}^{d-1}f_j(a)R_j(a)=
\sum_{j=0}^{d-1}
f_j(a)
\sum_{v(\beta)=a^{-1}}\beta^j=
\sum_{v(\beta)=a^{-1}}F(\beta,a).
\end{aligned}
\]
\end{proof}

We next compare the factors occurring in
$\mathfrak L_{\bs,\bu}^{[v]}(T,t)$ at $t=v^{-q^N}$ with those at
$t=v^{-1}$. 

\begin{lem}\label{lem:Omega-shift}
For $N\geq1$ and $h\geq0$, we have
\begin{equation}\label{eq:Omega-shift}
\left.
\Omega_v^{(N+h)}(t)t^{N+h}
\right|_{t=v^{-q^N}}
=
v^{-Nq^N}
\left(
\left.
\Omega_v^{(h)}(t)t^h
\right|_{t=v^{-1}}
\right)^{q^N}.
\end{equation}
Moreover, if $i<N$, then
\begin{equation}\label{eq:Omega-vanishing}
\Omega_v^{(i)}(v^{-q^N})=0.
\end{equation}
\end{lem}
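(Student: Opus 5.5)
The plan is to compute both sides directly from the product formula $\Omega_v^{(i)}(t)=\prod_{k>i}(1-v^{q^k}t)$, which was recorded in the proof of \cref{prop:trace-entire} and follows from \eqref{eq:v-adic AT}. Vanishing \eqref{eq:Omega-vanishing} is then immediate. If $i<N$, the index $k=N$ satisfies $k>i$. So $\Omega_v^{(i)}(v^{-q^N})$ contains the factor $1-v^{q^N}v^{-q^N}=0$. The product converges $v$-adically at $t=v^{-q^N}$ since $|v|_v<1$, so evaluation commutes with the product.

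For \eqref{eq:Omega-shift}, first evaluate the left-hand side:
\[
\Omega_v^{(N+h)}(v^{-q^N})\,v^{-q^N(N+h)}
=
v^{-q^N(N+h)}\prod_{k>N+h}\left(1-v^{q^k-q^N}\right).
\]
Reindex by $k=N+k'$ with $k'>h$. Then
\[
v^{q^k-q^N}=\bigl(v^{q^{k'}-1}\bigr)^{q^N}.
\]
Since raising to the $q^N$-th power is a ring endomorphism in characteristic $p$ and is continuous, we get
\[
\prod_{k'>h}\bigl(1-v^{q^{k'}-1}\bigr)^{q^N}
=
\Bigl(\prod_{k'>h}(1-v^{q^{k'}}v^{-1})\Bigr)^{q^N}
=
\bigl(\Omega_v^{(h)}(v^{-1})\bigr)^{q^N}.
\]

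It then remains to match the powers of $v$. On the right-hand side, the factor $(t^h|_{t=v^{-1}})^{q^N}$ equals $v^{-hq^N}$. Together with the prefactor $v^{-Nq^N}$, this gives $v^{-(N+h)q^N}$, which matches the left-hand side. This completes the identity.

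No step here is a genuine obstacle. The only point requiring care is justifying that specialization at $t=v^{-q^N}$ may be done factorwise in the infinite product. This holds because $\Omega_v^{(i)}\in\Ecal_v$ is entire and the partial products converge uniformly on bounded discs, as shown in the proof of \cref{prop:trace-entire}. The same justification covers moving the $q^N$-th power inside the limit.
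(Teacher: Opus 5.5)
Your proposal is correct and follows essentially the paper's argument. Both proofs use the product $\Omega_v^{(i)}(t)=\prod_{k>i}(1-v^{q^k}t)$: the factor $k=N$ gives the vanishing, and the shift identity comes from reindexing $k=N+k'$ and pulling out the $q^N$-th power by additivity of Frobenius. Your remarks justifying factorwise evaluation of the infinite product only make explicit a step the paper leaves implicit.
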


\begin{proof}
Using
\[
\Omega_v^{(i)}(t)
=
\prod_{j>i}(1-v^{q^j}t),
\]
if $i<N$, the factor corresponding to $j=N$ vanishes at
$t=v^{-q^N}$, which proves \eqref{eq:Omega-vanishing}. For
$i=N+h$, we have
\[
\begin{aligned}
\Omega_v^{(N+h)}(v^{-q^N})v^{-(N+h)q^N}
&=
v^{-(N+h)q^N}
\prod_{j>N+h}(1-v^{q^j-q^N})\\
&=
v^{-Nq^N}
\left(
v^{-h}
\prod_{j>h}(1-v^{q^j-1})
\right)^{q^N},
\end{aligned}
\]
which gives \eqref{eq:Omega-shift}.
\end{proof}

For $N\geq1$, put
\begin{equation}\label{eq:lambda-vN}
\lambda_{v,N}
:=
v^{-Nq^N}
\prod_{j=1}^N
\Delta_v(\theta^{q^N},\theta^{q^j})
\in\ov{K}^{\times}.
\end{equation}
Here, note that for $j<N$,
\[
v(\theta^{q^N})=v^{q^N}\neq v^{q^j}
=v(\theta^{q^j}),
\]
and
\[
\Delta_v(\theta^{q^N},\theta^{q^N})
=
v'(\theta^{q^N})
\neq0.
\]

\begin{lem}\label{lem:D-shift}
For $N\geq1$ and $h\geq0$, we have
\begin{equation}\label{eq:D-shift}
\Dcal_{v,N+h}(\theta^{q^N})
=
v^{Nq^N}\lambda_{v,N}
\Dcal_{v,h}(\theta)^{q^N}.
\end{equation}
If $\beta\neq\theta^{q^N}$ is another root of
$v(T)-v^{q^N}$, then
\begin{equation}\label{eq:D-vanishing}
\Dcal_{v,i}(\beta)=0
\qquad(i\geq N).
\end{equation}
\end{lem}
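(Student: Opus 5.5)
Both assertions come from evaluating the product $\Dcal_{v,i}(T)=\prod_{j=1}^i\Delta_v(T,\theta^{q^j})$ factor by factor. For \eqref{eq:D-shift}, I would split the product defining $\Dcal_{v,N+h}(\theta^{q^N})$ at $j=N$:
\[
\Dcal_{v,N+h}(\theta^{q^N})
=\prod_{j=1}^{N}\Delta_v(\theta^{q^N},\theta^{q^j})\cdot
\prod_{j=N+1}^{N+h}\Delta_v(\theta^{q^N},\theta^{q^j}).
\]
By the definition \eqref{eq:lambda-vN}, the first product is exactly $v^{Nq^N}\lambda_{v,N}$. For the second product, I substitute $j=N+k$ with $1\leq k\leq h$. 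Since $\Delta_v(X,Y)=(v(X)-v(Y))/(X-Y)$ is a polynomial with coefficients in $\FF_q$, we have
\[
\Delta_v(\theta^{q^N},\theta^{q^{N+k}})=\Delta_v(\theta,\theta^{q^k})^{q^N}.
\]
This is the same identity as \eqref{eq:Delta-twist}, applied after evaluating at $T=\theta$. Multiplying over $k$ gives $\Dcal_{v,h}(\theta)^{q^N}$, which proves the identity. The empty case $h=0$ is consistent with this, since $\Dcal_{v,0}=1$. The only point needing care is the factor $j=N$: there $\Delta_v(\theta^{q^N},\theta^{q^N})$ must be read as the polynomial value $v'(\theta^{q^N})$, not as a quotient. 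This is already built into $\lambda_{v,N}$, so no additional argument is required.

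For \eqref{eq:D-vanishing}, I would show that the single factor $\Delta_v(\beta,\theta^{q^N})$ vanishes. This factor occurs in $\Dcal_{v,i}(\beta)$ whenever $i\geq N$. By assumption $v(\beta)=v^{q^N}=v(\theta^{q^N})$ and $\beta\neq\theta^{q^N}$, so
\[
\Delta_v(\beta,\theta^{q^N})=\frac{v(\beta)-v(\theta^{q^N})}{\beta-\theta^{q^N}}=0.
\]
Here I use the genuine polynomial identity $(T-a)\Delta_v(T,a)=v(T)-v(a)$: because $\beta-\theta^{q^N}\neq0$, it forces $\Delta_v(\beta,\theta^{q^N})=0$.

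I do not expect a real obstacle. The one subtlety is keeping the polynomial $\Delta_v$ distinct from the difference quotient at coincident arguments, and this only matters in the two places noted above.
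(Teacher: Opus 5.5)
Your proposal is correct and follows the paper's proof: split the product at $j=N$, identify the first block with $v^{Nq^N}\lambda_{v,N}$, use $\Delta_v(\theta^{q^N},\theta^{q^{N+k}})=\Delta_v(\theta,\theta^{q^k})^{q^N}$ for the remaining factors, and note that the single factor $\Delta_v(\beta,\theta^{q^N})$ vanishes when $i\geq N$. One small wording point: that Frobenius identity is \eqref{eq:Delta-twist} evaluated at $T=\theta^{q^N}$, not at $T=\theta$; equivalently, it follows from the fact, which you also state, that $\Delta_v(X,Y)$ is a polynomial with coefficients in $\FF_q$.
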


\begin{proof}
For \eqref{eq:D-shift}, notice that
\[
\Dcal_{v,N+h}(\theta^{q^N})
=
\left(
\prod_{j=1}^N
\Delta_v(\theta^{q^N},\theta^{q^j})
\right)
\prod_{m=1}^h
\Delta_v(\theta^{q^N},\theta^{q^{N+m}})=v^{Nq^N}\lambda_{v,N}
\Dcal_{v,h}(\theta)^{q^N}.
\]
Here, we use the fact that $\Delta_v(\theta^{q^N},\theta^{q^{N+m}})=\Delta_v(\theta,\theta^{q^{m}})^{q^N}$.

Now, let $\beta\neq\theta^{q^N}$ be another root of
$v(T)-v^{q^N}$. The factor corresponding to $j=N$ is
\[
\Delta_v(\beta,\theta^{q^N})
=
\frac{v(\beta)-v(\theta^{q^N})}
{\beta-\theta^{q^N}}
=
0.
\]
Hence, we obtain \eqref{eq:D-vanishing}, which completes the proof.
\end{proof}

Combining the preceding three lemmas, we determine the
specialization of $\mathfrak L_{\bs,\bu}^{[v]}(T,t)$ on the fiber
$v(T)=v^{q^N}$.

\begin{prop}\label{prop:fiber-specialization}
Let $\bs=(s_1,\ldots,s_r)\in\Ical$ be non-empty and let
$\bu=(u_1,\ldots,u_r)\in
(\ov{K}^{\times})^r\cap\DD_{\bs,v}^{\Conv}$.
Let $w=\wt(\bs)$ and $N\geq1$. For every root $\beta$ of
$v(T)-v^{q^N}$, we have
\begin{equation}\label{eq:fiber-specialization}
\mathfrak L_{\bs,\bu}^{[v]}(\beta,v^{-q^N})
=
\begin{cases}
\displaystyle
\lambda_{v,N}^w
\left(
\widetilde{\pi}_v^w\Li_{\bs}(\bu)_v
\right)^{q^N},
&
\beta=\theta^{q^N},
\\[3mm]
0,
&
\beta\neq\theta^{q^N}.
\end{cases}
\end{equation}
Consequently,
\begin{equation}\label{eq:trace-fiber-specialization}
\bT\left(
\mathfrak L_{\bs,\bu}^{[v]}(T,t)
\right)(v^{-q^N})
=
\lambda_{v,N}^w
\left(
\widetilde{\pi}_v^w\Li_{\bs}(\bu)_v
\right)^{q^N}
\mathbf V_N,
\end{equation}
where
\begin{equation}\label{eq:VN}
\mathbf V_N
:=
\begin{pmatrix}
1\\
\theta^{q^N}\\
\vdots\\
(\theta^{q^N})^{d-1}
\end{pmatrix}.
\end{equation}
\end{prop}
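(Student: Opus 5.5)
The plan is to evaluate the defining series \eqref{eq:L-series} term by term at $(T,t)=(\beta,v^{-q^N})$, and then pass to trace coordinates with \cref{lem:trace-specialization}. Evaluation commutes with the sum because, by \cref{prop:trace-entire}, the basis coefficients $f_{0,j}(t)$ are entire. Their defining series converge uniformly on closed disks, so we may specialize $t=v^{-q^N}$ summand by summand and then substitute $T=\beta$ in the finitely many basis monomials. Each summand is
\[
\prod_{\ell=1}^r u_\ell^{q^{i_\ell}}\bigl(\Omega_v^{(i_\ell)}(t)t^{i_\ell}\bigr)^{s_\ell}\Dcal_{v,i_\ell}(\beta)^{s_\ell}.
\]

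First consider which summands survive. If $i_r<N$, then \eqref{eq:Omega-vanishing} makes the factor $\Omega_v^{(i_r)}(v^{-q^N})$ vanish, so the summand is $0$. Otherwise $i_\ell\geq N$ for every $\ell$.
\begin{itemize}
\item If $\beta\neq\theta^{q^N}$, then \eqref{eq:D-vanishing} kills every remaining summand. This gives the second case of \eqref{eq:fiber-specialization}.
\item If $\beta=\theta^{q^N}$, write $i_\ell=N+h_\ell$ with $h_1>\cdots>h_r\geq0$. Combining \eqref{eq:Omega-shift} with \eqref{eq:D-shift}, the $\ell$-th factor becomes
\[
\lambda_{v,N}^{s_\ell}\Bigl(u_\ell^{q^{h_\ell}}\bigl(\Omega_v^{(h_\ell)}(v^{-1})v^{-h_\ell}\bigr)^{s_\ell}\Dcal_{v,h_\ell}(\theta)^{s_\ell}\Bigr)^{q^N},
\]
since the powers $v^{\mp Nq^N}$ cancel.
\end{itemize}
Taking the product over $\ell$ and summing over $(h_\ell)$ yields $\lambda_{v,N}^w\bigl(\mathfrak L^{[v]}_{\bs,\bu}(\theta,v^{-1})\bigr)^{q^N}$. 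Here the $q^N$-power map is continuous and additive, so it passes through the convergent sum. \Cref{prop:L-specialization} then gives the first case.

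For the trace statement, note that $v(T)-v^{q^N}=(v(T)-v)^{q^N}$ up to twisting coefficients in $\FF_q$. Its roots are the $\theta'^{q^N}$ with $\theta'$ running over the roots of $v$, which are distinct because $v$ is separable over the perfect field $\FF_q$. Hence \cref{lem:trace-specialization} applies with $a=v^{-q^N}$. Applying it to $T^m\mathfrak L^{[v]}_{\bs,\bu}$, whose basis coefficients are entire by \cref{prop:trace-entire}, gives
\[
\Tr\bigl(T^m\mathfrak L^{[v]}_{\bs,\bu}\bigr)(v^{-q^N})=\sum_\beta \beta^m\,\mathfrak L^{[v]}_{\bs,\bu}(\beta,v^{-q^N}).
\]
Only $\beta=\theta^{q^N}$ contributes, which yields the $m$-th entry of \eqref{eq:trace-fiber-specialization}.

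The main obstacle is justifying evaluation of the two-variable object at $T=\beta$. Evaluating $F=\sum_j f_jT^j$ at $(\beta,a)$ must agree with evaluating the summands of the series before reduction to the basis. This holds because reducing modulo $v(T)-t^{-1}$ is compatible with substituting a root $\beta$ of $v(T)-a^{-1}$, and because the series converges uniformly at $t=a$.
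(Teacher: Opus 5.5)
Your proposal is correct and follows the paper's proof essentially step for step: you kill the summands with $i_r<N$ via \eqref{eq:Omega-vanishing}, use \eqref{eq:D-vanishing} off $\theta^{q^N}$, combine \eqref{eq:Omega-shift} and \eqref{eq:D-shift} at $\theta^{q^N}$ before invoking \cref{prop:L-specialization}, and conclude with \cref{lem:trace-specialization}; your justification of term-by-term evaluation also makes explicit a point the paper leaves implicit. The only loose step is separability, which needs two fixes: $v(T)-v^{q^N}$ is the coefficientwise twist $(v(T)-v)^{(N)}$ rather than a $q^N$-th power, and separability of $v$ over $\FF_q$ only concerns the roots of $v(X)=0$, so you should add, as the paper does, that $v'\neq0$ and that a common root of $v'$ and $v(X)-v^{q^N}$ would lie in $\ov{\FF}_q$, which would make $v^{q^N}$ algebraic over $\FF_q$, an absurdity.
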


\begin{proof}
By \cref{lem:Omega-shift}, every summand for which some
$i_\ell<N$ vanishes at $t=v^{-q^N}$. Thus only indices of the form
\[
i_\ell=N+h_\ell,
\qquad
h_1>\cdots>h_r\geq0,
\]
survive.

Suppose first that $\beta=\theta^{q^N}$. Applying
\cref{lem:Omega-shift,lem:D-shift} to each factor gives
\[
\mathfrak L_{\bs,\bu}^{[v]}
(\theta^{q^N},v^{-q^N})
=
\lambda_{v,N}^w
\left(
\mathfrak L_{\bs,\bu}^{[v]}(\theta,v^{-1})
\right)^{q^N}.
\]
By \cref{prop:L-specialization}, we obtain
\[
\mathfrak L_{\bs,\bu}^{[v]}
(\theta^{q^N},v^{-q^N})
=
\lambda_{v,N}^w
\left(
\widetilde{\pi}_v^w\Li_{\bs}(\bu)_v
\right)^{q^N}.
\]

Now let $\beta\neq\theta^{q^N}$ be another root of
$v(T)-v^{q^N}$. By \cref{lem:D-shift},
\[
\Dcal_{v,i}(\beta)=0
\qquad(i\geq N).
\]
Hence every remaining summand vanishes at
$(T,t)=(\beta,v^{-q^N})$, and therefore
\[
\mathfrak L_{\bs,\bu}^{[v]}(\beta,v^{-q^N})=0.
\]
This proves \eqref{eq:fiber-specialization}.

We next note that $v(T)-v^{q^N}$ is separable. Since $v$ is
irreducible over the perfect field $\FF_q$, we have $v'(T)\neq0$.
If $\beta$ were a common root of $v(T)-v^{q^N}$ and $v'(T)$, then
$\beta$ would be algebraic over $\FF_q$. Hence $v(\beta)$ would also
be algebraic over $\FF_q$, contradicting
$v(\beta)=v^{q^N}$.
Therefore, by Proposition \ref{prop:trace-entire} and Lemma \ref{lem:trace-specialization} for $0\leq m\leq d-1$, we obtain
\[
\begin{aligned}
&
\Tr\left(
T^m\mathfrak L_{\bs,\bu}^{[v]}(T,t)
\right)(v^{-q^N})
\\
&\qquad=
\sum_{v(\beta)=v^{q^N}}
\beta^m
\mathfrak L_{\bs,\bu}^{[v]}(\beta,v^{-q^N})
\\
&\qquad=
(\theta^{q^N})^m
\lambda_{v,N}^w
\left(
\widetilde{\pi}_v^w\Li_{\bs}(\bu)_v
\right)^{q^N},
\end{aligned}
\]
which proves \eqref{eq:trace-fiber-specialization}, as desired.
\end{proof}

We next recover the value at $t=v^{-1}$ from the trace coordinates.
Write
\begin{equation}\label{eq:eta-v}
\frac{\Delta_v(T,\theta)}{v'(\theta)}
=
\sum_{m=0}^{d-1}\eta_{v,m}T^m,
\qquad
\boldsymbol{\eta}_v
:=
(\eta_{v,0},\ldots,\eta_{v,d-1}).
\end{equation}

\begin{lem}\label{lem:evaluation-row}
We have
\begin{equation}\label{eq:evaluation-L}
\boldsymbol{\eta}_v\,
\bT\left(
\mathfrak L_{\bs,\bu}^{[v]}(T,t)
\right)(v^{-1})
=
\widetilde{\pi}_v^w\Li_{\bs}(\bu)_v.
\end{equation}
\end{lem}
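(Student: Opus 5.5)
The plan is to compute the trace coordinates at $t=v^{-1}$ fiberwise, in the same way as in \cref{prop:fiber-specialization}, and then read off the value at $\beta=\theta$ with a Lagrange-interpolation row vector.

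\textbf{Step 1 (the specialization is legitimate).} By \cref{prop:trace-entire}, each entry of $\bT(\mathfrak L_{\bs,\bu}^{[v]}(T,t))$ and each basis coefficient $f_{0,j}$ lies in $\Ecal_v$. Hence these may be evaluated at $t=v^{-1}$. The polynomial $v(T)-v$ is separable, by the same argument used in the proof of \cref{prop:fiber-specialization}: a common root with $v'(T)$ would be algebraic over $\FF_q$, while $v$ is not. Therefore \cref{lem:trace-specialization} with $a=v^{-1}$ gives, for $0\le m\le d-1$,
\[
\Tr\bigl(T^m\mathfrak L_{\bs,\bu}^{[v]}(T,t)\bigr)(v^{-1})=\sum_{v(\beta)=v}\beta^m\,\mathfrak L_{\bs,\bu}^{[v]}(\beta,v^{-1}).
\]

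\textbf{Step 2 (contract against $\boldsymbol\eta_v$).} Multiplying by $\eta_{v,m}$ and summing over $m$ gives
\[
\boldsymbol{\eta}_v\,\bT\bigl(\mathfrak L_{\bs,\bu}^{[v]}\bigr)(v^{-1})=\sum_{v(\beta)=v}\frac{\Delta_v(\beta,\theta)}{v'(\theta)}\,\mathfrak L_{\bs,\bu}^{[v]}(\beta,v^{-1}).
\]
The polynomial $\Delta_v(T,\theta)=(v(T)-v(\theta))/(T-\theta)$ vanishes at every root $\beta\neq\theta$ of $v(T)-v$, and it equals $v'(\theta)$ at $\beta=\theta$. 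So the weight is the Kronecker delta at $\theta$, and only the $\beta=\theta$ term survives. \cref{prop:L-specialization} then identifies that term as $\widetilde{\pi}_v^{w}\Li_{\bs}(\bu)_v$.

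\textbf{Main obstacle.} The delicate point is the meaning of ``$\mathfrak L_{\bs,\bu}^{[v]}(\beta,v^{-1})$.'' The proof of \cref{lem:trace-specialization} needs $F(\beta,a)=\sum_j f_j(a)\beta^j$, and this is defined through the basis coefficients $f_j$. By contrast, \cref{prop:L-specialization} evaluates the defining series term by term at $(\theta,v^{-1})$. I would reconcile the two as follows. Reducing each summand to the basis $\{1,T,\ldots,T^{d-1}\}$ uses only the relation $v(T)=t^{-1}$, which holds at $(\beta,v^{-1})$. So the termwise evaluation of each summand equals the evaluation of its reduced basis coefficients. Summing over all summands is then justified by the uniform convergence on closed disks established in the proof of \cref{prop:trace-entire}, which permits interchanging evaluation at $t=v^{-1}$ with summation. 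Once this interchange is in place, the remaining steps are the direct computations above.
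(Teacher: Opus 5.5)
Your proposal is correct and is essentially the paper's proof: separability of $v(T)-v$, then \cref{lem:trace-specialization} at $a=v^{-1}$, then the Kronecker-delta behaviour of $\Delta_v(\beta,\theta)/v'(\theta)$ on the roots of $v(T)-v$, and finally \cref{prop:L-specialization}. Your extra paragraph identifies the basis-coefficient evaluation $\sum_j f_j(v^{-1})\beta^j$ with the termwise evaluation of the defining series, by reducing each summand modulo $v(T)-t^{-1}$ and using the uniform convergence from the proof of \cref{prop:trace-entire}; this makes explicit a step the paper leaves unstated, and the argument is sound.
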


\begin{proof}
As in the proof of \cref{prop:fiber-specialization},
$v(T)-v$ is separable. Hence, by
\cref{prop:trace-entire,lem:trace-specialization},
\[
\begin{aligned}
\boldsymbol{\eta}_v\,
\bT\left(
\mathfrak L_{\bs,\bu}^{[v]}(T,t)
\right)(v^{-1})
&=
\sum_{m=0}^{d-1}
\eta_{v,m}
\Tr\left(
T^m\mathfrak L_{\bs,\bu}^{[v]}(T,t)
\right)(v^{-1})
\\&=
\sum_{v(\beta)=v}
\left(
\sum_{m=0}^{d-1}\eta_{v,m}\beta^m
\right)
\mathfrak L_{\bs,\bu}^{[v]}(\beta,v^{-1})
\\&=
\sum_{v(\beta)=v}
\frac{\Delta_v(\beta,\theta)}
{v'(\theta)}
\mathfrak L_{\bs,\bu}^{[v]}(\beta,v^{-1}).
\end{aligned}
\]
Since
\[
\frac{\Delta_v(\theta,\theta)}{v'(\theta)}=1,
\qquad
\Delta_v(\beta,\theta)=0
\]
for every other root $\beta$ of $v(T)-v$, we obtain
\[
\boldsymbol{\eta}_v\,
\bT\left(
\mathfrak L_{\bs,\bu}^{[v]}(T,t)
\right)(v^{-1})
=
\mathfrak L_{\bs,\bu}^{[v]}(\theta,v^{-1}).
\]
The result now follows from \cref{prop:L-specialization}.
\end{proof}

Finally, we end this section with the following lemma.

\begin{lem}\label{lem:orbit-nondegeneracy}
Let
\[
Q(t)
=
(Q_0(t),\ldots,Q_{d-1}(t))
\in\Mat_{1\times d}(\ov{K}[t])
\]
be nonzero. Then
\[
Q(v^{-q^N})\mathbf V_N\neq0
\]
for all but finitely many $N\geq1$.
\end{lem}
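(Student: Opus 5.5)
The idea is to view $Q(v^{-q^N})\mathbf V_N$ as the value of a single nonzero two-variable polynomial at the point $(v^{-q^N},\theta^{q^N})$. All these points lie on the curve $tv(X)=1$, and the polynomial has $X$-degree less than $d$, so it cannot vanish on that whole curve; Bezout then leaves only finitely many common zeros.

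\emph{Step 1 (the polynomial).} Put
\[
G(t,X):=\sum_{j=0}^{d-1}Q_j(t)X^j\in\ov{K}[t,X].
\]
Since $Q\neq0$, $G$ is a nonzero polynomial, and $\deg_X G\leq d-1$. By \eqref{eq:VN},
\[
Q(v^{-q^N})\mathbf V_N=G\bigl(v^{-q^N},\theta^{q^N}\bigr).
\]
Set $P_N:=(v^{-q^N},\theta^{q^N})$. Since $v(\theta^{q^N})=v^{q^N}$, every $P_N$ lies on the affine curve
\[
C:\ F(t,X):=t\,v(X)-1=0 .
\]

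\emph{Step 2 ($F$ is irreducible and does not divide $G$).} First, $F$ is irreducible in $\ov{K}[t,X]$. It has degree $1$ in $t$, its coefficients $v(X)$ and $-1$ are coprime, so any factorization has a factor lying in $\ov{K}[X]$. Such a factor divides both $v(X)$ and $1$, hence is a unit.

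Second, $F\nmid G$. Suppose $G=FH$ with $H\neq0$, and view both sides in $\ov{K}(t)[X]$. The leading coefficient of $F$ in $X$ is $t\neq0$, so $\deg_X G=d+\deg_X H\geq d$. This contradicts $\deg_X G\leq d-1$.

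\emph{Step 3 (finiteness of common zeros).} Since $F$ is irreducible and does not divide $G$, the polynomials $F$ and $G$ have no common factor. By Bezout's theorem, or equivalently because the resultant $\mathrm{Res}_X(F,G)\in\ov{K}[t]$ is nonzero, the set $\{F=G=0\}\subset\ov{K}^2$ is finite.

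\emph{Step 4 (the points are distinct).} The points $P_N$ for $N\geq1$ are pairwise distinct. Indeed, $\theta$ is transcendental over $\FF_q$, so $\theta^{q^N}=\theta^{q^{N'}}$ forces $N=N'$. Hence only finitely many $P_N$ lie in the finite set of Step 3. For all other $N$ we get $G(P_N)\neq0$, which is the claim.

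The only step needing care is Step 2, showing that $G$ does not vanish identically on $C$; the degree bound $\deg_X G<d$ settles it directly, and everything else is routine.
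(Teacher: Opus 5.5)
Your proof is correct and is essentially the paper's argument. The paper pulls your $G$ back along the parametrization $T\mapsto(v(T)^{-1},T)$ of the curve $t\,v(X)=1$, giving the one-variable rational function $R(T)=\sum_m Q_m(v(T)^{-1})T^m$. This $R$ vanishes at the infinitely many distinct points $\theta^{q^N}$, so $R=0$, which contradicts the linear independence of $1,T,\ldots,T^{d-1}$ over $\ov{K}(t)$; that is the same degree-$d$ fact your Step~2 uses to get $F\nmid G$, and using the rational parametrization simply lets the paper avoid Bezout and resultants.
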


\begin{proof}
Suppose that
\[
Q(v^{-q^N})\mathbf V_N=0
\]
for infinitely many $N$. Consider the rational function
\[
R(T)
:=
\sum_{m=0}^{d-1}
Q_m(v(T)^{-1})T^m
\in\ov{K}(T).
\]
For every such $N$, we have
\[
R(\theta^{q^N})
=
\sum_{m=0}^{d-1}
Q_m(v^{-q^N})(\theta^{q^N})^m
=
Q(v^{-q^N})\mathbf V_N
=
0.
\]
Since the points $\theta^{q^N}$ are pairwise distinct, we obtain
$R(T)=0$. By the linear independence of
$\{1,T,\ldots,T^{d-1}\}$ over
$\ov{K}(t)$, we see that for every $0\leq m\leq d-1$, 
\[
Q_m(t)=Q_m(v(T)^{-1})=0,
\]
which is a contradiction.
\end{proof}

\appendix
\section{Proof of the Main Theorems}
\label{sec:proof-main}

The proof follows the strategy of \cite[\S4]{Chang14} and its
$v$-adic adaptation in \cite[\S3.1]{ChenABP}. The main difference in
the present setting is that, for a finite place of arbitrary degree,
the special values are recovered from the trace coordinates constructed
in the previous section. We study linear relations among
$v$-adic Carlitz multiple polylogarithmic values and then deduce the
corresponding results for $v$-adic multiple zeta values.

\subsection{\texorpdfstring{Proof of Theorem~\ref{thm:main}}{Proof of Theorem 1.2}}

We first treat linear relations among values of the same weight.

\begin{lem}\label{lem:Frobenius-rigidity}
Let $S(T)\in\ov{K}(T)$ and $c\in\ov{K}$. Suppose that
$S(\theta^{q^N})=c^{q^N}$ for all sufficiently large $N$ for which
the left-hand side is defined. Then $S(T)\in\FF_q(T)$. Moreover, if
$S$ is regular at $T=\theta$, then $S(\theta)\in K$.
\end{lem}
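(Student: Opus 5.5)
Write $S^{(1)}$ for the rational function obtained from $S$ by raising every coefficient to the $q$-th power, so that $S^{(1)}(x^q)=S(x)^q$ for every $x\in\ov{K}$ at which $S$ is defined. The plan is to show first that $S^{(1)}=S$, and then to conclude.

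\textbf{Step 1: $S^{(1)}=S$.} For every sufficiently large $N$ at which $S(\theta^{q^N})$ is defined, we have
\[
S^{(1)}(\theta^{q^{N+1}})=S(\theta^{q^N})^q=(c^{q^N})^q=c^{q^{N+1}}=S(\theta^{q^{N+1}}).
\]
Only finitely many $N$ are excluded, namely those where $\theta^{q^N}$ is a pole of $S$ or of $S^{(1)}$. Hence $S^{(1)}-S$ vanishes at infinitely many points $\theta^{q^{N+1}}$, and these points are pairwise distinct because $\theta$ is transcendental over $\FF_q$. So $S^{(1)}=S$ in $\ov{K}(T)$.

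\textbf{Step 2: the coefficients lie in $\FF_q$.} Write $S=P/Q$ in lowest terms with $Q$ monic. Then $P^{(1)}/Q^{(1)}$ is also in lowest terms with $Q^{(1)}$ monic. Uniqueness of the reduced representation gives $Q^{(1)}=Q$ and $P^{(1)}=P$. So every coefficient $a$ satisfies $a^q=a$, which means $a\in\FF_q$, and therefore $S\in\FF_q(T)$.

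This step is where care is needed. Reducedness must be preserved under the twist, and it is: $(-)^{(1)}$ is a ring automorphism of $\ov{K}[T]$, because $\ov{K}$ is perfect and $a\mapsto a^q$ is an automorphism of it. The argument also silently needs $S\neq\infty$ at infinitely many $\theta^{q^N}$, which holds since $S$ has only finitely many poles.

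\textbf{Step 3: the value at $\theta$.} If $S$ is regular at $T=\theta$, then $S(\theta)$ is obtained by evaluating a quotient of polynomials with $\FF_q$-coefficients at $\theta$, and the denominator does not vanish there. Hence $S(\theta)\in\FF_q(\theta)=K$.

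Steps 2 and 3 never use $c$ beyond Step 1. We do not need to show $c\in K$; only $S(\theta)\in K$ is claimed.
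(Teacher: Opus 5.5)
Your proof is correct and follows the paper's argument essentially step for step. Both show $S^{(1)}=S$ by comparing values at the infinitely many distinct points $\theta^{q^{N+1}}$, and both then use a reduced representation with monic denominator to force the coefficients into $\FF_q$. The only small difference is in that last step: the paper deduces $C\mid C^{(1)}$ from $B^{(1)}C=BC^{(1)}$, while you invoke uniqueness of the reduced form under the ring automorphism $(-)^{(1)}$, which amounts to the same thing.
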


\begin{proof}
For all sufficiently large $N$, we have
\[
S^{(1)}(\theta^{q^{N+1}})
=
S(\theta^{q^N})^q
=
c^{q^{N+1}}
=
S(\theta^{q^{N+1}}).
\]
Hence, the rational function $S^{(1)}-S$ has infinitely many zeros,
and therefore $S^{(1)}=S$.

Write $S=B/C$, where $B,C\in\ov{K}[T]$ are relatively prime and
$C$ is monic. Then
\[
B^{(1)}C=BC^{(1)}.
\]
It follows that $C\mid C^{(1)}$. Since $C$ and $C^{(1)}$ are monic
of the same degree, we have $C^{(1)}=C$, and consequently
$B^{(1)}=B$. Thus $B,C\in\FF_q[T]$, so $S\in\FF_q(T)$. If $S$ is
regular at $T=\theta$, then $S(\theta)\in\FF_q(\theta)=K$.
\end{proof}

\begin{lem}\label{lem:fixed-weight}
Let $Z_1,\ldots,Z_m$ be nonzero convergent $v$-adic Carlitz multiple
polylogarithmic values at algebraic points of the same weight. If
$Z_1,\ldots,Z_m$ are linearly independent over $K$, then they are
linearly independent over $\ov{K}$.
\end{lem}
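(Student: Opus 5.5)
We argue by induction on $m$, following \cite[\S4]{Chang14} and \cite[\S3.1]{ChenABP}. Suppose $Z_1,\ldots,Z_m$ all have weight $w$, are $K$-linearly independent, and satisfy a nontrivial relation $\sum_k \rho_k Z_k=0$ with $\rho_k\in\ov{K}$. By the induction hypothesis we may assume every proper subfamily is $\ov{K}$-linearly independent, so all $\rho_k\neq0$; normalize so that $\rho_m=1$. Write $Z_k=\Li_{\bs_k}(\bu_k)_v$ with $\wt(\bs_k)=w$. Multiplying by $\widetilde{\pi}_v^w$ and using \cref{lem:evaluation-row}, the relation becomes
\[
\sum_k \rho_k\,\boldsymbol{\eta}_v\,\bT\bigl(\mathfrak L^{[v]}_{\bs_k,\bu_k}\bigr)(v^{-1})=0.
\]

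To realize this as a linear relation among entries of a single vector, we form the block-diagonal direct sum of the systems $(\Phi_{\bs_k,\bu_k},\psi_{\bs_k,\bu_k})$ of \cref{prop:Frobenius-system}. All have weight $w$, so the first entries all equal $\Omega_v^w$ and may be merged into one. \cref{cor:ABP-admissible} still holds for the combined system, since its determinant is a product of the same factors. \cref{thm:vadic-ABP} at $\gamma=v^{-1}$ then yields a polynomial row vector $P(t)$ with $P(v^{-1})=\rho$ and $P(t)\psi(t)=0$. The relation involves only the top blocks $\bT(\mathfrak L_{\bs_k,\bu_k})$, and the lower-depth blocks carry positive powers of $\Omega_v$.

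The next step, as in Chang's argument, is to apply the twist $(-1)$ to $P\psi=0$ and use $\psi^{(-1)}=\Phi\psi$. This gives $(P^{(-1)}\Phi - P)\psi=0$, and the relation coming from the top blocks is shorter. Combining this with minimality, and with the $K$-linear independence of lower-weight/lower-depth pieces handled inductively, we want to reduce to a relation of the form
\[
\sum_k Q_k(t)\,\bT\bigl(\mathfrak L_{\bs_k,\bu_k}\bigr)(t)+(\text{lower terms})=0,
\]
with $Q_k\in\Mat_{1\times d}(\ov{K}[t])$ and the $Q_m$ component nonzero.

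Next we specialize at $t=v^{-q^N}$ for large $N$. The factor $\Omega_v$ vanishes there by \cref{lem:Omega-shift}, so the lower terms disappear. \cref{prop:fiber-specialization} gives
\[
\sum_k Q_k(v^{-q^N})\mathbf V_N\,\lambda_{v,N}^w\bigl(\widetilde{\pi}_v^wZ_k\bigr)^{q^N}=0.
\]
By \cref{lem:orbit-nondegeneracy}, $Q_m(v^{-q^N})\mathbf V_N\ne0$ for large $N$. Setting $R_k(T):=\sum_j Q_{k,j}(v(T)^{-1})T^j$, we have $R_k(\theta^{q^N})=Q_k(v^{-q^N})\mathbf V_N$. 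After dividing by $R_m$ and by the common factor $\lambda_{v,N}^w\widetilde{\pi}_v^{wq^N}$, we obtain $\sum_k S_k(\theta^{q^N})Z_k^{q^N}=0$ with $S_k=R_k/R_m\in\ov{K}(T)$.

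We then compare this with the $q^N$-th power of the original relation, $\sum_k\rho_k^{q^N}Z_k^{q^N}=0$. If $S_k(\theta^{q^N})\ne\rho_k^{q^N}$ for some $k$, subtracting the two relations gives a shorter $\ov{K}$-relation among the $Z_k^{q^N}$. Taking $q^N$-th roots turns this into a shorter relation among the $Z_k$, which contradicts minimality. Hence $S_k(\theta^{q^N})=\rho_k^{q^N}$ for all large $N$. \cref{lem:Frobenius-rigidity} then gives $S_k\in\FF_q(T)$, and if $S_k$ is regular at $\theta$, also $\rho_k=S_k(\theta)\in K$. The relation is therefore $K$-rational, contradicting the $K$-linear independence of the $Z_k$.

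The main obstacle is the middle step: producing a polynomial relation $Q$ with nonzero $Q_m$ whose specializations at $v^{-q^N}$ are tied to $\rho$ itself, and showing that $S_k$ is regular at $\theta$. To connect $\rho_k$ with $S_k(\theta)$ we would use $P(v^{-1})=\rho$ together with the evaluation row $\boldsymbol{\eta}_v$. Here the identity $\Delta_v(\theta,\theta)/v'(\theta)=1$ should make $R_k(\theta)$ match $\rho_k$, possibly up to a common factor that cancels in the ratio. If regularity fails, we would first try replacing $P$ by $(t-v^{-1})^{-e}P$ to clear the common zero.
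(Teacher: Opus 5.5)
Your endgame (specializing at $t=v^{-q^N}$, invoking \cref{lem:orbit-nondegeneracy}, comparing with the $q^N$-th power of the minimal relation, and applying \cref{lem:Frobenius-rigidity}) is exactly the paper's argument. As written, however, the proposal leaves open the step you call the main obstacle, and the route you sketch for it (twisting $P\psi=0$, inducting on lower-depth pieces) is a detour that would actually hurt. No further reduction is needed, because the row $P(t)$ from \cref{thm:vadic-ABP} is already of the required form. Let $Q_k(t)\in\Mat_{1\times d}(\ov{K}[t])$ be the last $d$ entries of the $k$-th block of $P$, i.e.\ the part paired with $\bT(\mathfrak L^{[v]}_{\bs_k,\bu_k})$. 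Every other entry of $\psi$ is a factor $\Omega_v^{w-w_\ell}$ with $w-w_\ell\geq 1$ times an entire series, so it vanishes at $t=v^{-q^N}$ by \eqref{eq:Omega-vanishing}; nothing about these lower pieces has to be known. Because $P(v^{-1})$ is the row encoding your relation, $Q_k(v^{-1})=\rho_k\boldsymbol{\eta}_v$, and in particular $Q_m\neq0$. By contrast, passing to $P^{(-1)}\Phi-P$ would in general destroy this normalization at $v^{-1}$, which is precisely what ties the $S_k$ to the $\rho_k$.

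This normalization also settles the regularity question. Since $v(\theta)=v$,
\[
R_k(\theta)=Q_k(v^{-1})\,(1,\theta,\ldots,\theta^{d-1})^{\mathsf T}=\rho_k\,\frac{\Delta_v(\theta,\theta)}{v'(\theta)}=\rho_k,
\]
with no extra factor. Hence $R_m(\theta)=1$, $S_k$ is regular at $\theta$, and $S_k(\theta)=\rho_k$. Even without this computation, any $S\in\FF_q(T)$ is regular at the transcendental point $\theta$ and satisfies $S(\theta^{q^N})=S(\theta)^{q^N}$. So $S_k(\theta^{q^N})=\rho_k^{q^N}$ already forces $\rho_k=S_k(\theta)\in K$. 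Your fallback of dividing $P$ by $(t-v^{-1})^e$ is therefore unnecessary, and it is impossible for $e>0$ since $P(v^{-1})\neq0$. Once the middle step is replaced by this direct choice of $Q_k$, your proof coincides with the paper's; your induction on $m$ is equivalent to its choice of a minimally dependent subset.
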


\begin{proof}
Suppose on the contrary that $Z_1,\ldots,Z_m$ are linearly
dependent over $\ov{K}$. Replacing them by a minimally dependent
subset and relabeling if necessary, we may assume that
\begin{equation}\label{eq:minimal-relation}
Z_1+c_2Z_2+\cdots+c_mZ_m=0,
\qquad
c_2,\ldots,c_m\in\ov{K},
\end{equation}
and that $Z_2,\ldots,Z_m$ are linearly independent over $\ov{K}$.
Put $c_1:=1$, and let $w$ denote their common weight.

For each $1\leq j\leq m$, let
\[
\psi_j^{(-1)}=\Phi_j\psi_j
\]
be the Frobenius system constructed in
\cref{prop:Frobenius-system} for $Z_j$. We form the block system
\[
\psi
:=
\begin{pmatrix}
\psi_1\\
\vdots\\
\psi_m
\end{pmatrix},
\qquad
\Phi
:=
\operatorname{diag}(\Phi_1,\ldots,\Phi_m).
\]
Then
\[
\psi^{(-1)}=\Phi\psi.
\]
By \cref{cor:ABP-admissible}, the pair $(\Phi,\psi)$ satisfies the
hypotheses of the $v$-adic Anderson--Brownawell--Papanikolas
criterion at $t=v^{-1}$.

For each $1\leq j\leq m$, let $r_j$ denote the depth of the index
corresponding to $Z_j$, so that
\[
\psi_j\in\Mat_{(1+r_jd)\times1}(\Ecal_v).
\]
Define
\[
\rho_j
:=
\left(
0,\ldots,0,\,
c_j\boldsymbol{\eta}_v
\right)
\in
\Mat_{1\times(1+r_jd)}(\ov{K}),
\]
where the initial zero block has length $1+(r_j-1)d$. We then set
\[
\rho
:=
\left(
\rho_1\mid\cdots\mid\rho_m
\right).
\]
By \cref{lem:evaluation-row}, we have
\[
\rho\psi(v^{-1})
=
\widetilde{\pi}_v^w
\sum_{j=1}^m c_jZ_j
=
0.
\] 
Hence, by \cref{thm:vadic-ABP}, there exists a polynomial row
$\mathbf p(t)$ such that
\begin{equation}\label{eq:fixed-lift}
\mathbf p(v^{-1})=\rho,
\qquad
\mathbf p(t)\psi(t)=0.
\end{equation}
Write
\[
\mathbf p(t)
=
\left(
\mathbf p_1(t)\mid\cdots\mid\mathbf p_m(t)
\right),
\]
where $\mathbf p_j(t)$ is the block corresponding to $\psi_j$. For
each $j$, let
\[
Q_j(t)\in\Mat_{1\times d}(\ov{K}[t])
\]
denote the last $d$ entries of $\mathbf p_j(t)$.
Thus, $Q_j(t)$ is the block of $\mathbf p(t)$ paired with the last
trace coordinate block of $\psi_j$. Then
\begin{equation}\label{eq:Q-initial}
Q_j(v^{-1})
=
c_j\boldsymbol{\eta}_v.
\end{equation}
In particular, $Q_1(v^{-1})=\boldsymbol{\eta}_v\neq0$, and hence
$Q_1(t)\neq0$.

For $N\geq1$, put
\[
a_{j,N}:=Q_j(v^{-q^N})\mathbf V_N.
\]
By \cref{lem:orbit-nondegeneracy}, $a_{1,N}\neq0$ for all
sufficiently large $N$. By \eqref{eq:Omega-vanishing} with $i=0$, we have
\[
\Omega_v(v^{-q^N})=0,
\]
it follows that the first coordinate and all the trace coordinate blocks of $\psi_j(v^{-q^N})$ except the last one vanish. Moreover, by
\cref{prop:fiber-specialization}, we have
\[
\bT(\mathfrak L_{\bs_j,\bu_j}^{[v]}(T,t))(v^{-q^N})
=
\lambda_{v,N}^w
\left(
\widetilde{\pi}_v^w Z_j
\right)^{q^N}
\mathbf V_N.
\]
Hence,
\[
\psi_j(v^{-q^N})
=
\begin{pmatrix}
0\\
\vdots\\
0\\
\lambda_{v,N}^w
\left(
\widetilde{\pi}_v^w Z_j
\right)^{q^N}
\mathbf V_N
\end{pmatrix}.
\]
Specializing the second identity in \eqref{eq:fixed-lift} at $t=v^{-q^N}$, we obtain
\[ \lambda_{v,N}^w \left(\widetilde{\pi}_v^w\right)^{q^N} \sum_{j=1}^m a_{j,N}Z_j^{q^N}=0, \]
and hence
\[
\sum_{j=1}^m a_{j,N}^{q^{-N}}Z_j=0.
\]
Since $Z_2,\ldots,Z_m$ are linearly independent over $\ov{K}$, the
space of $\ov{K}$-linear relations among $Z_1,\ldots,Z_m$ is
one-dimensional. Comparing this relation with
\eqref{eq:minimal-relation}, we obtain
\begin{equation}\label{eq:ratio-orbit}
\frac{a_{j,N}}{a_{1,N}}
=
c_j^{q^N}
\qquad
(1\leq j\leq m)
\end{equation}
for all sufficiently large $N$.

For $1\leq j\leq m$, define
\[
A_j(T)
:=
Q_j(v(T)^{-1})
\begin{pmatrix}
1\\
T\\
\vdots\\
T^{d-1}
\end{pmatrix}
\in\ov{K}(T).
\]
Since $Q_1(t)=Q_1(v(T)^{-1})\neq0$ and $1,T,\ldots,T^{d-1}$ are linearly independent
over $\ov{K}(t)$, we have $A_1(T)\neq0$. Moreover,
\[
A_j(\theta^{q^N})
=
Q_j(v^{-q^N})\mathbf V_N
=
a_{j,N}.
\]
Therefore, the rational function
\[
S_j(T):=\frac{A_j(T)}{A_1(T)}
\]
satisfies
\[
S_j(\theta^{q^N})
=
c_j^{q^N}
\]
for all sufficiently large $N$. By
\cref{lem:Frobenius-rigidity}, we have
\[
S_j(T)\in\FF_q(T).
\]

Finally, by \eqref{eq:eta-v} and \eqref{eq:Q-initial},
\[
A_j(\theta)
=
c_j\boldsymbol{\eta}_v
\begin{pmatrix}
1\\
\theta\\
\vdots\\
\theta^{d-1}
\end{pmatrix}
=
c_j\frac{\Delta_v(\theta,\theta)}{v'(\theta)}
=
c_j.
\]
In particular, $A_1(\theta)=1$. Hence, $S_j$ is regular at
$T=\theta$, and \cref{lem:Frobenius-rigidity} gives
$c_j=S_j(\theta)\in K$. Thus \eqref{eq:minimal-relation} is a
nontrivial $K$-linear relation among $Z_1,\ldots,Z_m$, which is a contradiction.
\end{proof}

We next separate values of distinct weights.

\begin{lem}\label{lem:cross-weight}
Let $w_1>\cdots>w_s$ be distinct positive integers. For each
$1\leq i\leq s$, let
$V_i=\{Z_{i1},\ldots,Z_{im_i}\}$ be a finite set of nonzero
convergent $v$-adic Carlitz multiple polylogarithmic values at
algebraic points of weight $w_i$. If each $V_i$ is linearly
independent over $K$, then
\[
\{1\}\cup\bigcup_{i=1}^sV_i
\]
is linearly independent over $\ov{K}$.
\end{lem}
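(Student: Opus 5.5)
By \cref{lem:fixed-weight}, each $V_i$ is already linearly independent over $\ov{K}$. So it suffices to show that a nontrivial $\ov{K}$-linear relation
\[
c_0+\sum_{i=1}^s\sum_{k=1}^{m_i}c_{ik}Z_{ik}=0
\]
is impossible. I would argue by induction on $s$. The case $s=0$ is trivial, so we may assume the relation genuinely involves the top weight $w_1$, i.e.\ some $c_{1k}\neq0$; otherwise we drop $V_1$ and apply the induction hypothesis.

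\textbf{Step 1: homogenise the weights.} Multiply the relation by $\widetilde{\pi}_v^{w_1}$. The term $Z_{ik}$ then appears as $\widetilde{\pi}_v^{w_1-w_i}\cdot\widetilde{\pi}_v^{w_i}Z_{ik}$, and the constant as $c_0\widetilde{\pi}_v^{w_1}$.

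\textbf{Step 2: build one Frobenius system.} For each $Z_{ik}$, take the system $(\Phi_{ik},\psi_{ik})$ of \cref{prop:Frobenius-system}, but with $\psi$ built using $w_1$ in place of its own weight. Concretely, use the blocks $\Omega_v^{w_1-w_\ell}\bT(\mathfrak{L}_{\bs_\ell,\bu_\ell}^{[v]})$ and the first entry $\Omega_v^{w_1}$. The computation of \cref{prop:Frobenius-system} goes through verbatim, with the exponents of $(1-vt)$ shifted by $w_1-w_i$, so the determinant condition of \cref{cor:ABP-admissible} still holds. Stack all these together with the scalar system $\Omega_v^{w_1}$, whose equation is $(\Omega_v^{w_1})^{(-1)}=(1-vt)^{w_1}\Omega_v^{w_1}$. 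The row $\rho$ puts $c_0$ on the scalar entry and $c_{ik}\boldsymbol{\eta}_v$ on the last trace block of each $\psi_{ik}$. By \cref{lem:evaluation-row}, together with $\Omega_v(v^{-1})=\widetilde{\pi}_v$, we get $\rho\psi(v^{-1})=0$. \cref{thm:vadic-ABP} then gives a polynomial row $\mathbf p(t)$ with $\mathbf p(v^{-1})=\rho$ and $\mathbf p\psi=0$.

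\textbf{Step 3: specialise at $t=v^{-q^N}$.} Since $\Omega_v(v^{-q^N})=0$, every entry carrying a positive power of $\Omega_v$ vanishes there. For $i\ge2$ this includes all of $\psi_{ik}$, because $w_1-w_i>0$; the scalar entry also vanishes. What survives is exactly the weight-$w_1$ part. As in the proof of \cref{lem:fixed-weight}, \cref{prop:fiber-specialization} gives
\[
\sum_k a_{1k,N}^{q^{-N}}Z_{1k}=0,\qquad a_{1k,N}=Q_{1k}(v^{-q^N})\mathbf V_N,
\]
where $Q_{1k}$ is the last block of $\mathbf p$ paired with $\psi_{1k}$. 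Because $V_1$ is $\ov{K}$-independent, $a_{1k,N}=0$ for all large $N$.

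\textbf{Step 4: conclude.} By \cref{lem:orbit-nondegeneracy}, $Q_{1k}(t)=0$. Hence $c_{1k}\boldsymbol{\eta}_v=Q_{1k}(v^{-1})=0$, so $c_{1k}=0$ for every $k$. The relation therefore only involves $\{1\}\cup V_2\cup\cdots\cup V_s$, and the induction hypothesis finishes the proof.

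The main obstacle is Step 2. One must check carefully that padding with the extra powers $\Omega_v^{w_1-w_i}$ keeps $\psi$ in $\Ecal_v$ and keeps $\Phi$ polynomial with $\det\Phi$ vanishing only at $0$ and $v^{-1}$. One must also check that the vanishing at $v^{-q^N}$ removes precisely the lower-weight and constant contributions, leaving the top-weight trace blocks intact. That last point is where $w_1>w_i$ is essential.
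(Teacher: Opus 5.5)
Your proposal is correct and follows essentially the same route as the paper: you homogenize each system to weight $w_1$ via the factor $\Omega_v^{w_1-w_i}$, lift the relation using \cref{thm:vadic-ABP}, and specialize at $t=v^{-q^N}$ so that only the top-weight last trace blocks survive, then conclude with \cref{lem:orbit-nondegeneracy} and \cref{lem:fixed-weight}. The differences are only cosmetic: you phrase the argument as an induction on $s$ that kills all top-weight coefficients, where the paper argues directly by contradiction; and you attach the constant term to a separate scalar system $\Omega_v^{w_1}$, where the paper places $a_0$ on the first entry of $\widehat{\psi}_{11}$.
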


\begin{proof}
By \cref{lem:fixed-weight}, each $V_i$ is linearly independent over
$\ov{K}$. Suppose on the contrary that
\begin{equation}\label{eq:cross-relation}
a_0+
\sum_{i=1}^s\sum_{j=1}^{m_i}a_{ij}Z_{ij}
=
0,
\qquad
a_0,a_{ij}\in\ov{K},
\end{equation}
is a nontrivial relation. After discarding the weights which do not
occur, we may assume that $a_{1j}\neq0$ for some $j$.

For each $Z_{ij}$, let
$\psi_{ij}^{(-1)}=\Phi_{ij}\psi_{ij}$ be the Frobenius system given
by \cref{prop:Frobenius-system}. We homogenize these systems to
weight $w_1$ by setting
\begin{equation}\label{eq:homogenized-system}
\widehat{\psi}_{ij}
:=
\Omega_v^{w_1-w_i}\psi_{ij},
\qquad
\widehat{\Phi}_{ij}
:=
(1-vt)^{w_1-w_i}\Phi_{ij}.
\end{equation}
Then
$\widehat{\psi}_{ij}^{(-1)}
=\widehat{\Phi}_{ij}\widehat{\psi}_{ij}$.
We form the block system
\[
\widehat{\psi}
:=
\begin{pmatrix}
\widehat{\psi}_{11}\\
\vdots\\
\widehat{\psi}_{1m_1}\\
\widehat{\psi}_{21}\\
\vdots\\
\widehat{\psi}_{sm_s}
\end{pmatrix},
\qquad
\widehat{\Phi}
:=
\operatorname{diag}
\left(
\widehat{\Phi}_{11},
\ldots,
\widehat{\Phi}_{1m_1},
\widehat{\Phi}_{21},
\ldots,
\widehat{\Phi}_{sm_s}
\right).
\]
Then
\[
\widehat{\psi}^{(-1)}
=
\widehat{\Phi}\widehat{\psi}.
\]
As in \cref{cor:ABP-admissible}, this system satisfies the hypotheses
of the $v$-adic Anderson--Brownawell--Papanikolas criterion at
$t=v^{-1}$.

For each $(i,j)$, let $r_{ij}$ denote the depth of the index
corresponding to $Z_{ij}$. Define
\[
\rho_{11}
:=
\left(
a_0,\,
0,\ldots,0,\,
a_{11}\boldsymbol{\eta}_v
\right)
\in
\Mat_{1\times(1+r_{11}d)}(\ov{K}),
\]
where the middle zero block has length $(r_{11}-1)d$, and, for
$(i,j)\neq(1,1)$, define
\[
\rho_{ij}
:=
\left(
0,\,
0,\ldots,0,\,
a_{ij}\boldsymbol{\eta}_v
\right)
\in
\Mat_{1\times(1+r_{ij}d)}(\ov{K}),
\]
where again the middle zero block has length $(r_{ij}-1)d$.
According to the ordering of the blocks in $\widehat{\psi}$, set
\[
\rho
:=
\left(
\rho_{11}
\mid\cdots\mid
\rho_{1m_1}
\mid
\rho_{21}
\mid\cdots\mid
\rho_{sm_s}
\right).
\]Since
$\Omega_v(v^{-1})=\widetilde{\pi}_v$, we obtain from
\cref{lem:evaluation-row} that
\[
\rho\widehat{\psi}(v^{-1})
=
\widetilde{\pi}_v^{w_1}
\left(
a_0+
\sum_{i=1}^s\sum_{j=1}^{m_i}a_{ij}Z_{ij}
\right)
=
0.
\]
By \cref{thm:vadic-ABP}, there exists a polynomial row
$\mathbf p(t)$ such that
\begin{equation}\label{eq:cross-lift}
\mathbf p(v^{-1})=\rho,
\qquad
\mathbf p(t)\widehat{\psi}(t)=0.
\end{equation}

For each $1\leq j\leq m_1$, let
$Q_j(t)\in\Mat_{1\times d}(\ov{K}[t])$ be the subrow of
$\mathbf p(t)$ corresponding to the last $d$ entries of the system
associated with $Z_{1j}$. Choose $j_0$ such that
$a_{1j_0}\neq0$. Then
\[
Q_{j_0}(v^{-1})
=
a_{1j_0}\boldsymbol{\eta}_v
\neq0,
\]
so $Q_{j_0}(t)\neq0$. By
\cref{lem:orbit-nondegeneracy},
\[
Q_{j_0}(v^{-q^N})\mathbf V_N\neq0
\]
for all sufficiently large $N$.

Fix such an $N$ and specialize the second identity in
\eqref{eq:cross-lift} at $t=v^{-q^N}$. If $i>1$, by \eqref{eq:Omega-vanishing},
$\Omega_v^{w_1-w_i}(v^{-q^N})=0$, so every coordinate of
$\widehat{\psi}_{ij}(v^{-q^N})$ vanishes. For $i=1$, all coordinates
except the final trace coordinates also vanish. Hence
\cref{prop:fiber-specialization} gives
\[
0
=
\lambda_{v,N}^{w_1}
\left(\widetilde{\pi}_v^{w_1}\right)^{q^N}
\sum_{j=1}^{m_1}
\left(
Q_j(v^{-q^N})\mathbf V_N
\right)
Z_{1j}^{q^N}.
\]
The scalar outside the sum is nonzero, and the coefficient
corresponding to $j_0$ is nonzero. It follows that
$Z_{11}^{q^N},\ldots,Z_{1m_1}^{q^N}$ are linearly dependent over
$\ov{K}$. Taking $q^N$-th roots gives a nontrivial
$\ov{K}$-linear relation among $Z_{11},\ldots,Z_{1m_1}$,
which is a contradiction.
\end{proof}

We can now prove the first main theorem.

\begin{proof}[Proof of \cref{thm:main}]
We first prove that the weight decomposition is direct. Suppose that
\[
a_0+G_{w_1}+\cdots+G_{w_s}=0,
\]
where $w_1,\ldots,w_s$ are distinct positive integers,
$a_0\in\ov{K}$, and
$G_{w_i}\in\overline{\Lcal}_{v,w_i}$. For each nonzero
$G_{w_i}$, choose finitely many weight $w_i$ $v$-adic Carlitz
multiple polylogarithmic values at algebraic points whose
$\ov{K}$-span contains $G_{w_i}$, and replace them by a maximal
$K$-linearly independent subset $V_i$. The $K$-span of $V_i$
contains the original finite set, so the $\ov{K}$-span of $V_i$
still contains $G_{w_i}$.

Expanding the relation in terms of these sets gives an
$\ov{K}$-linear relation among
$\{1\}\cup\bigcup_iV_i$. By \cref{lem:cross-weight}, this relation
is trivial. Therefore
\[
\overline{\Lcal}_v
=
\ov{K}\oplus
\bigoplus_{w\geq1}\overline{\Lcal}_{v,w}.
\]
Since the harmonic product preserves the total weight, this
decomposition makes $\overline{\Lcal}_v$ a graded
$\ov{K}$-algebra.

It remains to prove the base change assertion. Fix $w\geq1$ and
choose a $K$-basis $\{Z_\lambda\}_{\lambda\in\Lambda}$ of
$\Lcal_{v,w}$ consisting of $v$-adic Carlitz multiple
polylogarithmic values at algebraic points. Every finite subset of
this basis is $K$-linearly independent and therefore, by
\cref{lem:fixed-weight}, $\ov{K}$-linearly independent. Hence
$\{Z_\lambda\}_{\lambda\in\Lambda}$ is linearly independent over
$\ov{K}$. It follows that the canonical map
\[
\ov{K}\otimes_K\Lcal_{v,w}
\longrightarrow
\overline{\Lcal}_{v,w}
\]
is injective, while surjectivity follows from the definition of
$\overline{\Lcal}_{v,w}$. Thus it is an isomorphism for every
$w\geq1$. Summing over all weights and using the direct sum decomposition, we obtain
\[
\ov{K}\otimes_K\Lcal_v
\xrightarrow{\ \sim\ }
\overline{\Lcal}_v.
\]
\end{proof}

\subsection{\texorpdfstring{Proofs of Theorem~\ref{thm:mainMZV} and Corollary~\ref{cor:transcendence}}{Proofs of Theorem 1.3 and Corollary 1.4}}
We now deduce the corresponding statements for $v$-adic multiple
zeta values. By \cite[Theorem~5.2.5 and Definition~6.1.1]{CM21},
every $v$-adic multiple zeta value of weight $w$ is expressed as a
$K$-linear combination of $v$-adic Carlitz multiple star
polylogarithmic values of weight $w$ at integral points. By
\cite[Remark~5.2.1]{CCM22}, these values belong to the $K$-span of
the convergent $v$-adic Carlitz multiple star polylogarithmic values
of weight $w$. Moreover, by \cite[Proposition~3.3.5]{ChenABP}, the
$K$-spans of the convergent $v$-adic Carlitz multiple star
polylogarithmic values and the convergent $v$-adic Carlitz multiple
polylogarithmic values of weight $w$ coincide. Hence,
\begin{equation}\label{eq:Z-in-L}
\Zcal_{v,w}\subseteq\Lcal_{v,w},
\qquad
\overline{\Zcal}_{v,w}
\subseteq
\overline{\Lcal}_{v,w}.    
\end{equation}

\begin{proof}[Proof of \cref{thm:mainMZV}]
By \eqref{eq:Z-in-L} and \cref{thm:main}, the sum
\[
\ov{K}
+
\sum_{w\geq1}\overline{\Zcal}_{v,w}
\]
is direct. Therefore,
\[
\overline{\Zcal}_v
=
\ov{K}\oplus
\bigoplus_{w\geq1}\overline{\Zcal}_{v,w}.
\]
Together with the $q$-shuffle product relations for $v$-adic
multiple zeta values, this makes $\overline{\Zcal}_v$ a graded
$\ov{K}$-algebra.

It remains to prove the base change assertion. Fix $w\geq1$, and let
$Z_1,\ldots,Z_m\in\Zcal_{v,w}$ be linearly independent over $K$.
By \eqref{eq:Z-in-L}, these elements belong to $\Lcal_{v,w}$.
The isomorphism
\[
\ov{K}\otimes_K\Lcal_{v,w}
\xrightarrow{\ \sim\ }
\overline{\Lcal}_{v,w}
\]
from \cref{thm:main} implies that $Z_1,\ldots,Z_m$ remain linearly
independent over $\ov{K}$. Consequently, the canonical map
\[
\ov{K}\otimes_K\Zcal_{v,w}
\longrightarrow
\overline{\Zcal}_{v,w}
\]
is injective. It is surjective by definition and hence is an
isomorphism. Summing over all weights and using the direct sum decomposition, we obtain
\[
\ov{K}\otimes_K\Zcal_v
\xrightarrow{\ \sim\ }
\overline{\Zcal}_v.
\]
\end{proof}

The transcendence statement in \cref{cor:transcendence} now follows immediately.

\begin{proof}[Proof of \cref{cor:transcendence}]
Let $\bs\in\Ical$ be non-empty and let
$\bu\in(\ov{K}^{\times})^{\dep(\bs)}
\cap\DD_{\bs,v}^{\Conv}$. Suppose that
$\Li_{\bs}(\bu)_v$ is nonzero and algebraic over $K$. Then
$\Li_{\bs}(\bu)_v\in\ov{K}$. On the other hand,
$\Li_{\bs}(\bu)_v\in
\overline{\Lcal}_{v,\wt(\bs)}$, and hence
\cref{thm:main} gives
\[
\Li_{\bs}(\bu)_v
\in
\ov{K}\cap
\overline{\Lcal}_{v,\wt(\bs)}
=
\{0\},
\]
a contradiction. Thus $\Li_{\bs}(\bu)_v$ is either zero or
transcendental over $K$.

Similarly, $\zeta_A(\bs)_v$ is either zero or transcendental over $K$.
\end{proof}

\sloppy

\end{document}